\theoremstyle{plain}
\newtheorem{theorem}{Theorem}
\newtheorem{lemma}[theorem]{Lemma}
\newtheorem{proposition}[theorem]{Proposition}
\theoremstyle{definition}
\newtheorem{definition}[theorem]{Definition}
\newtheorem{example}[theorem]{Example}
\newtheorem{remark}[theorem]{Remark} 
\newcommand{\N}{\mathbb{N}}
\newcommand{\K}{\mathbb{K}}
\newcommand{\Z}{\mathbb{Z}}
\newcommand{\rep}{\mathrm{rep}}
\newcommand{\val}{\mathrm{val}}
\newcommand{\ba}{\mathbf{a}}
\newcommand{\bb}{\mathbf{b}}
\newcommand{\bu}{\mathbf{u}}
\newcommand{\bv}{\mathbf{v}}
\newcommand{\bw}{\mathbf{w}}
\newcommand{\bn}{\mathbf{n}}
\newcommand{\bz}{\mathbf{0}}
\newcommand{\bU}{\mathbf{U}}
\newcommand{\bA}{\mathbf{A}}
\newcommand{\bB}{\mathbf{B}}
\newcommand{\couple}[2]{\left(\begin{smallmatrix} #1 \\ #2 \end{smallmatrix}\right)}
\newcommand{\duple}[2]{\left(\begin{smallmatrix} #1 \\\vdots\\ #2 \end{smallmatrix}\right)}
\newcommand{\dcart}[2]{#1\times \cdots\times #2}
\newcommand\undermat[2]{%
  \makebox[0pt][l]{$\smash{\underbrace{\phantom{%
    \begin{matrix}#2\end{matrix}}}_{\text{$#1$}}}$}#2}
\def\restriction#1#2{\mathchoice
              {\setbox1\hbox{${\displaystyle #1}_{\scriptstyle #2}$}
              \restrictionaux{#1}{#2}}
              {\setbox1\hbox{${\textstyle #1}_{\scriptstyle #2}$}
              \restrictionaux{#1}{#2}}
              {\setbox1\hbox{${\scriptstyle #1}_{\scriptscriptstyle #2}$}
              \restrictionaux{#1}{#2}}
              {\setbox1\hbox{${\scriptscriptstyle #1}_{\scriptscriptstyle #2}$}
              \restrictionaux{#1}{#2}}}
\def\restrictionaux#1#2{{#1\,\smash{\vrule height .8\ht1 depth .85\dp1}}_{\,#2}}
\author{\'Emilie Charlier, Célia Cisternino and Manon Stipulanti\\
Department of Mathematics\\
University of Li\`ege\\
All\'ee de la D\'ecouverte 12\\
4000 Li\`ege, Belgium\\
{\tt \{echarlier,ccisternino,m.stipulanti\}@uliege.be}}
\title{Robustness of Pisot-regular sequences} 
\date{\today} 
\begin{document}
\thispagestyle{empty}
\maketitle

\begin{abstract} 
We consider numeration systems based on a $d$-tuple $\bU=(U_1,\ldots,U_d)$ of sequences of integers and we define $(\bU,\K)$-regular sequences through $\K$-recognizable formal series, where $\K$ is any semiring. We show that, for any $d$-tuple $\bU$ of Pisot numeration systems and any semiring $\K$, this definition does not depend on the greediness of the $\bU$-representations of integers. The proof is constructive and is based on the fact that the normalization is realizable by a $2d$-tape finite automaton. In particular, we use an ad hoc operation mixing a $2d$-tape automaton and a $\K$-automaton in order to obtain a new $\K$-automaton.
\end{abstract}

\bigskip
\hrule
\bigskip

\noindent 2010 {\it Mathematics Subject Classification}: 68Q70, 20M35, 11A67, 11B85.

\noindent \emph{Keywords: Regular sequence, recognizable formal series, weighted automaton, semiring, numeration system, Pisot number, normalizer}

\bigskip
\hrule
\bigskip

\section{Introduction}
A $b$-automatic sequence is a sequence $f\colon \N\to \Delta$, where $\Delta$ is a finite alphabet, that is generated by a deterministic finite automaton with output (DFAO) as follows: the $n$th term $f(n)$ of the sequence is output by the DFAO when the input is the $b$-expansion of $n$. This notion originally appeared in the work of Büchi~\cite{Buchi1960}, but Cobham was the first to systematically study $b$-automatic sequences, or equivalently, $b$-recognizable sets of integers~\cite{Cobham1969}. A large survey was undertaken by Allouche and Shallit~\cite{AlloucheShallit2003}. The most famous example is the Thue--Morse sequence, which is $2$-automatic. 

With the aim of generalizing $b$-automatic sequences to sequences having infinitely many values, Allouche and Shallit introduced the notion of $(b,\K)$-regular sequences over a N\oe therian ring $\K$~\cite{AlloucheShallit1992}. In this paper, we choose to work with the following definition, which originally was a characterization of $(b,\K)$-regular sequences in \cite{AlloucheShallit1992}. A sequence $f\colon \N\to \K$ with values in a semiring $\K$ is said to be $(b,\K)$-regular if there exist a positive integer $r$ and matrices $M_0,\ldots,M_{b-1}\in\K^{r\times r}$, $\lambda \in \K^{1 \times r}$ and $\gamma \in \K^{r \times 1}$ such that, for all words $a_\ell\cdots a_0$ over the alphabet $\{0,\ldots,b-1\}$, the matrix product $\lambda M_{a_\ell}\cdots M_{a_0} \gamma$ equals $f(n)$ if $a_\ell\cdots a_0$ is the $b$-expansion of $n$ and equals $0$ otherwise. The latter condition translates in algebraic terms by saying that the formal series (with noncommutative variables) $\sum_{w\in \rep_b(\N)}f(\val_b(w))w$ is $\K$-recognizable. In the previous series, the words $w$ run over $\rep_b(\N)$, which is the language of the $b$-expansions of non-negative integers, that is, the language made of the words over the alphabet $\{0,\ldots,b-1\}$ not starting with the letter $0$. Note that we do not impose any condition on the semiring $\K$ in our definition. Regular sequences with no condition on the underlying semiring were studied by Berstel and Reutenauer~\cite{BerstelReutenauer2011} and used in~\cite{Charlier2018,CharlierRampersadShallit2012}. In particular, it can be proven that a sequence with values in a semiring $\K$ is $b$-automatic if and only if it is $(b,\K)$-regular and takes only finitely many values. 

The notion of $(b,\K)$-regular sequences can be extended to more general numeration systems in a natural way. Such generalizations were considered for example in~\cite{AlloucheScheicherTichy2000,
Charlier2018,
CharlierRampersadShallit2012,
LeroyRigoStipulanti2017,
RigoMaes2002}. In this paper, we focus on the class of Pisot numeration systems. A Pisot number is an algebraic integer greater than $1$ all of whose Galois conjugates have modulus less than $1$. In these systems, natural numbers are expanded via an increasing base sequence $U=(U(i))_{i\in\N}$ with bounded quotients $\frac{U(i+1)}{U(i)}$ and that satisfies a linear recurrence relation whose characteristic polynomial is the minimal polynomial of a Pisot number. We then talk about $(U,\K)$-regular sequences (precise definitions will be given in Section~\ref{Section : Preliminaries}). We refer to~\cite{FrougnySakatovitch2010} for a survey on Pisot numeration systems.

In the case of integer bases, it is not hard to see that a sequence $f\colon\N\to\K$ is $(b,\K)$-regular if and only if the series $\sum_{w \in \{0,1,\ldots,b-1\}^*} f(\val_b(w))\, w$ is $\K$-recognizable \cite{BerstelReutenauer2011}. Proving this equivalence consists in dealing with non-greedy representations of integers. For integer base numeration systems, this boils down to adding or removing leading zeroes. However, when considering more general numeration systems, there exist non-greedy representations of another kind. For example, the words $11$ and $100$ both represent the integer $3$ in the Zeckendorf numeration system, which is based on the sequence $(1,2,3,5,8,13,\ldots)$ of Fibonacci numbers \cite{Zeckendorf1972}. Our aim is to provide characterizations of regular sequences in the extended framework of Pisot numeration systems that only depend on the value function. In other words, we want to characterize the family of $(U,\K)$-regular sequences in a way that does not depend on the choice of an algorithm to represent numbers.
Moreover, we will carry our work in a multidimensional setting, meaning that we will define and study multidimensional $(\bU,\K)$-regular sequences $f\colon\N^d\to \K$ where $\bU=(U_1,\ldots,U_d)$ is a $d$-tuple of Pisot numeration systems. 

Our method relies on the property that normalization in Pisot numeration systems is realizable by a computable finite automaton~\cite{BruyereHansel1997,FrougnySolomyak1996}.
The existence of such a {\em normalizer} will be crucial in the proof of our main result. Along the way, we will show the existence of normalizers extended both to negative values and to the multidimensional setting. Let us also stress that all our techniques are effective, meaning that linear representations are computable for all formal series that will be proven to be $\K$-recognizable.

The paper is organized as follows. In Section~\ref{Section : Preliminaries}, we provide the necessary background. First, we define $\K$-automata and formal series. We then introduce the notions of $\K$-recognizable series and numeration systems, in particular Pisot numeration systems. In Section~\ref{Section : Multi (U,K)-regular sequences}, we define multidimensional $(\bU,\K)$-regular sequences and we state our main result, which is Theorem~\ref{Thm : EquivU}. It provides characterizations of multidimensional $(\bU,\K)$-regular sequences that are independent of the choice of admissible $\bU$-representations of integers.  Section~\ref{Section : Normalization} is concerned with normalization in Pisot numeration systems. We first recall the existence of a normalizer associated with any Pisot numeration system in dimension $1$, that is, a $2$-tape automaton accepting words of the form $\couple{u}{v}$ where $u$ has a non-negative value $n$ and $v$ is the greedy representation of $n$. We then extend the normalizer to words with negative values and to the multidimensional setting. Finally, in Section~\ref{Section : Generalization}, we prove Theorem~\ref{Thm : EquivU}. In the process, we define and use a particular operation mixing $2d$-tape automata and $\K$-automata. In Section~\ref{Sec:RepLin}, we end by making the linear representations involved in the proof of Theorem~\ref{Thm : EquivU} explicit. Throughout the paper, we use a running example in order to illustrate the definitions as well as the construction in the proof of Theorem~\ref{Thm : EquivU}.

\section{Preliminaries}
\label{Section : Preliminaries}

We make use of common notions in formal language theory, such as alphabet, letter, word, length of a word, language and usual definitions from automata theory \cite{Lothaire2002}. In particular, we let $\varepsilon$ denote the empty word. 

We take the convention that elements of $\Z^d$ are written as vertical vectors. Moreover, a word 
\[
\left(\begin{smallmatrix}
a_{i_{1,1}} \\
a_{i_{2,1}} \\
\vdots\\
a_{i_{d,1}}
\end{smallmatrix}\right)
\left(\begin{smallmatrix}
a_{i_{1,2}} \\
a_{i_{2,2}} \\
\vdots\\
a_{i_{d,2}}
\end{smallmatrix}\right)
\cdots
\left(\begin{smallmatrix}
a_{i_{1,\ell}} \\
a_{i_{2,\ell}} \\
\vdots\\
a_{i_{d,\ell}}
\end{smallmatrix}\right)
\] 
in $(\Z^d)^*$ will be written as
\[
\left(\begin{smallmatrix}
a_{i_{1,1}}a_{i_{1,2}} \cdots a_{i_{1,\ell}}\\
a_{i_{2,1}}a_{i_{2,2}} \cdots a_{i_{2,\ell}}\\
\vdots\\
a_{i_{d,1}}a_{i_{d,2}} \cdots a_{i_{d,\ell}}
\end{smallmatrix}\right).
\]
In what follows, it is important to observe that if a word \[
	\duple{w_1}{w_d}
\] 
belongs to $(\Z^d)^*$ then necessarily $w_1,\ldots,w_d$ are words of equal lengths. 

We use bold letters to designate $d$-dimensional elements. Most of the time, $\bn$ is a vector in $\Z^d$, $\ba,\bb$ are letters in $\Z^d$ and $\bu,\bv,\bw$ are words in $(\Z^d)^*$. Moreover, $\bz$ denotes the zero letter of dimension $d$ and $\boldsymbol{\varepsilon}$ denotes the empty word of dimension $d$. Throughout the text, the context will clearly indicate if an element of $\Z^d$ should be seen as a vector of integers or as a letter. 

In order to avoid any confusion between the notions of length and absolute value, we use the notation $||\cdot||$ in order to designate the component-wise absolute value of vectors of integers given by
\[
	\Bigg|\Bigg|\duple{n_1}{n_d}\Bigg|\Bigg|
	=\duple{||n_1||}{||n_d||}.
\]
We extend this notation to words over $\Z^d$ by setting $||\boldsymbol{\varepsilon}||=\boldsymbol{\varepsilon}$ and for all $\bu,\bv\in(\Z^d)^*$, $||\bu\bv||=||\bu||\, ||\bv||$. The length of a word $w$ is, as usual, written $|w|$.

From now on, let $\K$ be a fixed semiring and let $d$ be a dimension, that is, a positive integer.

\subsection{Weighted automata}
\label{Section : Weighted finite automata}

A \emph{weighted automaton} $\mathcal{A}=(Q,I,T,A,E)$ with weights in $\K$, or simply a \emph{$\K$-automaton}, is composed of a finite set $Q$ of states, a finite alphabet $A$ and of three mappings $I\colon Q \to \K$, $T\colon Q \to \K$ and $E\colon Q \times A \times Q \to \K$. A \emph{transition} $(p,a,q)\in Q \times A \times Q$ such that $E(p,a,q)\ne 0$ is an \emph{edge}, the letter $a$ is its \emph{label} and $E(p,a,q)$ is its \emph{weight}. With each $q\in Q$, we associate an \emph{initial weight} $I(q)$ and a \emph{final weight} $T(q)$. We call a state $q\in Q$ \emph{initial} (resp.\ \emph{final}) if $I(q) \neq 0$ (resp.\ $T(q) \neq 0$). A \emph{path} in $\mathcal{A}$ is a sequence $c=(q_0,a_1,q_1)(q_1,a_2,q_2) \cdots (q_{n-1},a_n,q_n)$ of consecutive edges. For each path $c$, we let $i_c$ and $t_c$ denote the first and last states $q_0$ and $q_n$ of $c$ respectively. The \emph{weight} of the path $c$ is the product $E(c)=E(q_0,a_1,q_1)E(q_1,a_2,q_2)\cdots E(q_{n-1},a_n,q_n)$ of the weights of its edges and its \emph{label} is the word $a_1a_2\cdots a_n$. For each $w\in A^*$, we let $C_\mathcal{A}(w)$ denote the set of paths in $\mathcal{A}$ of label $w$.
The \emph{weight} of a word $w\in A^*$ in $\mathcal{A}$ is the quantity $\sum_{c\in C_\mathcal{A}(w)} I(i_c)E(c)T(t_c)$.

A $\K$-automaton is represented by a graph. Each state is a vertex and each edge carries an expression of the form $a|k$, where $a$ is its label and $k$ is its weight. Each initial (resp.\ final) state $q$ is distinguished by an incoming (resp.\ outgoing) arrow which carries the weight $I(q)$ (resp.\ $T(q)$). 

\begin{example}
We start a running example by considering the $\N$-automaton over the alphabet $\{0,1,2\}$ depicted in Figure~\ref{Fig : AutPoidsSerie}.
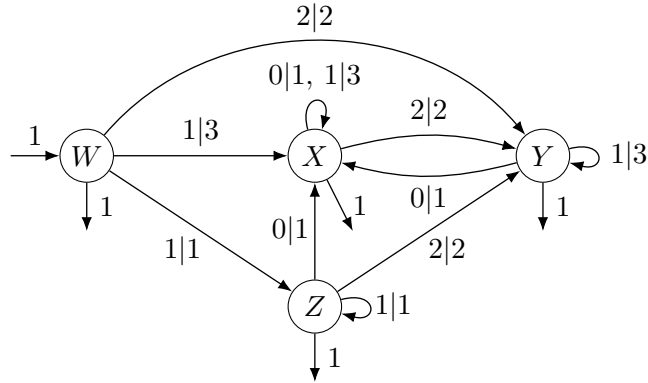
\begin{figure}[htb]
\begin{center}
\begin{tikzpicture}
\tikzstyle{every node}=[shape=circle, fill=none, draw=black,
minimum size=20pt, inner sep=2pt]
\node(1) at (0,0) {$W$};
\node(2) at (3,0) {$X$};
\node(3) at (6,0) {$Y$};
\node(4) at (3,-2) {$Z$};
\tikzstyle{every node}=[shape=circle, fill=none, draw=black, minimum size=15pt, inner sep=2pt]
\tikzstyle{every path}=[color=black, line width=0.5 pt]
\tikzstyle{every node}=[shape=circle, minimum size=5pt, inner sep=2pt]
\draw [-Latex] (-1,0) to node [above] {$1$} (1); 
\draw [-Latex] (1) to node [right] {$1$} (0,-1); 
\draw [-Latex] (2) to node [right] {$1$} (3.5,-1);
\draw [-Latex] (3) to node [right] {$1$} (6,-1); 
\draw [-Latex] (4) to node [right] {$1$} (3,-3); 

\draw [-Latex] (1) to node [above=-0.1] {$1|3$} (2);
\draw [-Latex] (1) to [pos=0.4] node [below] {$1|1$} (4);
\draw [-Latex] (1) [bend left=50] to node [above=-0.1] {$2|2$} (3);
\draw [-Latex] (2) to [loop above,looseness=7] node [above=-0.4] {$0|1,\, 1|3$} (2);
\draw [-Latex] (2) [bend left=15] to node [above=-0.1] {$2|2$} (3);
\draw [-Latex] (3) [bend left=15] to node [below=-0.1] {$0|1$} (2);
\draw [-Latex] (3) to [loop right,looseness=7] node [right] {$ 1|3$} (3);
\draw [-Latex] (4) to [loop right,looseness=7] node [right=-0.1] {$1|1$} (4);
\draw [-Latex] (4) to node [left=-0.1] {$0|1$} (2);
\draw [-Latex] (4) to node [pos=0.6,below] {$2|2$} (3);
\end{tikzpicture}
\end{center}
\caption{An $\N$-automaton over the alphabet $\{0,1,2\}$.}
\label{Fig : AutPoidsSerie}
\end{figure}
\end{example}

\subsection{Formal series}
\label{Section : Formal series}

A \emph{formal series} over a finite alphabet $A$ is a function $S\colon A^* \to \K$. The image under $S$ of a word $w$ is denoted by $(S, w)$ and is called the \emph{coefficient} of $w$ in $S$. We use the notation $S = \sum_{w\in A^*} (S,w)\, w$. In what follows, we simply talk about \emph{series} instead of formal series.

The \emph{Hadamard product} of two series $S,T \colon A^* \to \K$ is the series $S \odot T$ defined by $\sum_{w\in A^*} (S,w) (T,w)\; w$. If $L$ is a language over $A$, then its \emph{characteristic series} is the series $\underline{L} = \sum_{w \in L} w$. In particular, for $S\colon A^* \to \K$ and $L \subseteq A^*$, we have $ S \odot \underline{L} = \sum_{w\in L} (S,w) \, w$.

A series $S \colon A^* \to \K$ is \emph{$\K$-recognizable} if there exist an integer $r \in\N_{\ge 1}$, a morphism of monoids $\mu \colon A^* \to \K^{r\times r}$ (with respect to concatenation of words and multiplication of matrices) and two matrices $\lambda \in \K^{1 \times r}$ and $\gamma \in \K^{r \times 1}$ such that, for all words $w\in A^*$, $(S, w) = \lambda \mu(w) \gamma$. In this case, the triple $(\lambda, \mu, \gamma)$ is called a \emph{linear representation} of $S$.

Formal series and $\K$-automata can be linked. 
Given a $\K$-automaton $\mathcal{A}=(Q,I,T,A,E)$, the series \emph{recognized} by $\mathcal{A}$ is the series $\sum_{w\in A^*}\big(\sum_{c\in C_{\mathcal{A}}(w)} I(i_c)E(c)T(t_c)\big)\,w$ whose coefficients are the weights of the words in $\mathcal{A}$. 

\begin{proposition}
\label{Prop : LinkSeriesKAut}
A series is recognized by a $\K$-automaton if and only if it is $\K$-recognizable.
\end{proposition}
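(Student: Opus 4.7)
The plan is to prove both implications by a direct back-and-forth translation between the matrix-product description of $\K$-recognizability and the sum-over-paths description of the series recognized by a $\K$-automaton. In both directions the key identity is the expansion of an $r$-fold matrix product as a sum indexed by intermediate row/column indices, which matches exactly the enumeration of paths between two states.

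For the implication that every $\K$-recognizable series is recognized by a $\K$-automaton, I would start with a linear representation $(\lambda,\mu,\gamma)$ of $S$ of dimension $r$ and build the $\K$-automaton $\mathcal{A}=(Q,I,T,A,E)$ with $Q=\{1,\ldots,r\}$, $I(i)=\lambda_i$, $T(i)=\gamma_i$ and $E(i,a,j)=\mu(a)_{i,j}$ for all $i,j\in Q$ and $a\in A$. Given a word $w=a_1\cdots a_n\in A^*$, I would then compute
\[
\lambda\mu(w)\gamma
=\sum_{i_0,\ldots,i_n}\lambda_{i_0}\,\mu(a_1)_{i_0,i_1}\,\mu(a_2)_{i_1,i_2}\cdots\mu(a_n)_{i_{n-1},i_n}\,\gamma_{i_n},
\]
and observe that each tuple $(i_0,\ldots,i_n)$ of states corresponds, after discarding the zero terms, to a path $c\in C_\mathcal{A}(w)$ whose contribution is exactly $I(i_c)E(c)T(t_c)$. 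This shows that $(S,w)$ coincides with the weight of $w$ in $\mathcal{A}$, so $\mathcal{A}$ recognizes $S$.

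For the converse, starting from a $\K$-automaton $\mathcal{A}=(Q,I,T,A,E)$, I would fix an enumeration $Q=\{q_1,\ldots,q_r\}$ and define $\lambda\in\K^{1\times r}$ by $\lambda_i=I(q_i)$, $\gamma\in\K^{r\times 1}$ by $\gamma_i=T(q_i)$, and a morphism $\mu\colon A^*\to\K^{r\times r}$ by setting $\mu(a)_{i,j}=E(q_i,a,q_j)$ for $a\in A$ and extending via $\mu(\varepsilon)=\mathrm{Id}_r$ and $\mu(uv)=\mu(u)\mu(v)$. The same matrix-product expansion as above, read in the reverse direction, identifies $\lambda\mu(w)\gamma$ with the weight of $w$ in $\mathcal{A}$, yielding the linear representation $(\lambda,\mu,\gamma)$ of the series recognized by $\mathcal{A}$.

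There is no real obstacle here: the only point that deserves some care is the bookkeeping that turns an $(n+1)$-tuple of indices into a path, together with the convention that transitions of weight $0$ are not edges, so that the sum over index tuples and the sum over $C_\mathcal{A}(w)$ produce the same value. Both constructions are effective, so the correspondence between linear representations and $\K$-automata is explicit and will be available for use in the rest of the paper.
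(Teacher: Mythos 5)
Your proof is correct and is exactly the standard back-and-forth translation between matrix entries and transition weights that the paper invokes by citing Berstel--Reutenauer (Chapter~1, Proposition~6.1); the paper gives no independent argument beyond the remark that the linear representation encodes the weights of the $\K$-automaton. The only convention worth making explicit is that the empty word is handled by the empty path at each state, so that its weight $\sum_{q}I(q)T(q)$ agrees with $\lambda\mu(\varepsilon)\gamma=\lambda\gamma$.
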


A constructive proof can be found in~\cite[Chapter 1, Proposition 6.1]{BerstelReutenauer2011}. Roughly, the linear representation encodes the weights in the $\K$-automaton.

\begin{example}
\label{Ex : SerieS}
For all $a\in \{0,1,2\}$ and all $w\in \{0,1,2\}^*$, let $|w|_a$ be the number of occurrences of the letter $a$ in the word $w$. Let $g\colon \{0,1,2\}^*\to \N$ be the function mapping a word $w$ to the greatest $n$ such that $1^n$ is a prefix of $w$.
Consider the regular language $L$ of the words over $\{0,1,2\}$ starting with $1$ or $2$ and avoiding factors in $21^*2$, and consider the series $S$ defined by
\begin{equation}
\label{Eq : series-ex}
	(S,w)=
	\begin{cases}
	3^{|w|_1}2^{|w|_2}+3^{|w|_1-g(w)}2^{|w|_2} 
	&\text{if }w\in L \text{ and } g(w)\ne 0\\
	3^{|w|_1}2^{|w|_2}	
	&\text{if }w\in L \text{ and } g(w)= 0\\
	0 
	&\text{if }w\notin L
	\end{cases}
\end{equation}
for all $w\in \{0,1,2\}^*$. For example, the coefficients of $1121$, $2101$ and $2112$ in $S$ are $3^3\cdot 2+3 \cdot 2=60$, $3^2\cdot 2=18$ and $0$ respectively. The series $S$ is $\N$-recognizable as a linear representation is given by $(\lambda, \mu, \gamma)$ where 
\[
\lambda=\left(\begin{smallmatrix}
1 & 0 & 0 & 0
\end{smallmatrix}\right),\
\mu(0)=\left(\begin{smallmatrix}
0 & 0 & 0 & 0\\
0 & 1 & 0 & 0\\
0 & 1 & 0 & 0\\
0 & 1 & 0 & 0
\end{smallmatrix}\right), \
\mu(1)=\left(\begin{smallmatrix}
0 & 3 & 0 & 1\\
0 & 3 & 0 & 0\\
0 & 0 & 3 & 0\\
0 & 0 & 0 & 1
\end{smallmatrix}\right), \
\mu(2)=\left(\begin{smallmatrix}
0 & 0 & 2 & 0\\
0 & 0 & 2 & 0\\
0 & 0 & 0 & 0\\
0 & 0 & 2 & 0
\end{smallmatrix}\right),\
\gamma=\left(\begin{smallmatrix}
1\\
1\\
1\\
1
\end{smallmatrix}\right).
\]
The corresponding $\N$-automaton, with respect to the order of the states $W,X,Y,Z$, is that of Figure~\ref{Fig : AutPoidsSerie}. 
\end{example}

\subsection{Numeration systems}\label{Sec : NumSys}
A \emph{numeration system} is given by an increasing sequence $U=(U(i))_{i\in\N}$ of integers such that $U(0) = 1$ and $\sup_{i\in\N} \frac{U(i+1)}{U(i)}<+\infty$. The \emph{value function} $\val_U \colon \Z^* \to \Z$ maps any word $w = a_\ell \cdots a_0 \in \Z^*$ to $\val_U(w) = \sum_{i=0}^\ell a_i U(i)$. A \emph{$U$-representation} of an integer $n$ is a word $w\in \Z^*$ such that $n=\val_U(w)$. The \emph{greedy $U$-representation} $\rep_U(n)$ of a positive integer $n$ is the unique $U$-representation $a_\ell \cdots a_0$ of $n$ such that  for all $j \in \{0,\ldots,\ell\}$, $a_j\ge 0$ and $\sum_{i=0}^j a_iU(i)< U(j+1)$, and moreover $a_\ell \neq 0$. It implies that the digits of greedy $U$-representations belong to the alphabet $A_U=\{0, \ldots,  \sup_{i\in\N} \big\lceil \frac{U(i+1)}{U(i)}\big\rceil - 1 \}$. We set $\rep_U(0) = \varepsilon$. The \emph{numeration language} is the set $\rep_U(\N)=\{\rep_U(n)\colon n\in\N\}$.

A numeration system is \emph{linear} if it satisfies a linear recurrence relation over $\Z$. A \emph{Pisot number} is an algebraic integer greater than $1$ whose Galois conjugates all have modulus less than $1$. A \emph{Pisot numeration system} is a linear numeration system whose characteristic polynomial is the minimal polynomial of a Pisot number. For such a numeration system, $\rep_U (\N)$ is a regular language~\cite{FrougnySolomyak1996}. Note that Pisot numeration systems associated with the same Pisot number may only differ by their initial conditions.

\begin{example}
\label{Ex : SystemUphicarre}
Consider the Pisot number $\frac{3+\sqrt{5}}{2}$, which is the square of the golden ratio. Its minimal polynomial is $X^2-3X+1$. Let $U$ be the Pisot numeration system defined by $U(0)=1$, $U(1)=3$ and for $i\ge 2$, $U(i)=3U(i-1)-U(i-2)$. This system is the Bertrand numeration system associated with $\frac{3+\sqrt{5}}{2}$; see~\cite{Bertrand-Mathis1989,BruyereHansel1997}. We have $A_U=\{0,1,2\}$. A DFA accepting the language $0^*\rep_U(\N)$ is depicted in Figure~\ref{Fig : A-PhiCarre}. Observe that the language $L$ considered in Example~\ref{Ex : SerieS} is precisely the numeration language $\rep_U(\N)$.
\begin{figure}[htb]
\begin{center}
\begin{tikzpicture}
\tikzstyle{every node}=[shape=circle, fill=none, draw=black,
minimum size=20pt, inner sep=2pt]
\node(1) at (0,0) {$ $};
\node(2) at (2,0) {$ $};
\tikzstyle{every node}=[shape=circle, fill=none, draw=black,
minimum size=15pt, inner sep=2pt]
\node(Af) at (0,0) {};
\node(Bf) at (2,0) {};
\tikzstyle{every path}=[color=black, line width=0.5 pt]
\tikzstyle{every node}=[shape=circle, minimum size=5pt, inner sep=2pt]
\draw [-Latex] (-1,0) to node {} (1); 
\draw [-Latex] (1) to [loop above] node [above=-0.2] {$0,1$} (1);
\draw [-Latex] (2) to [loop above] node [above] {$1$} (2);
\draw [-Latex] (1) to [bend left] node [above]
{$2$} (2);
\draw [-Latex] (2) to [bend left] node [below]
{$0$} (1);
\end{tikzpicture}
\end{center}
\caption{An automaton accepting the language $0^*\rep_U(\N)$ where $U$ is the Bertrand numeration system associated with the Pisot number $\frac{3+\sqrt{5}}{2}$.}
\label{Fig : A-PhiCarre}
\end{figure}
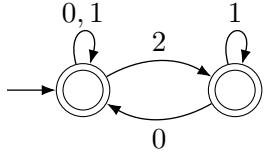
\end{example}

In order to be able to represent $d$-dimensional vectors of integers by words, we introduce the following usual convention. For a $d$-tuple of words
\[
	\duple{w_1}{w_d}
\]
in $(\Z^*)^d$, we set 
\[
	\duple{w_1}{w_d}^0
	=\duple{0^{\ell-|w_1|}w_1}{0^{\ell-|w_d|}w_d}
\]
where $\ell$ is the maximum of the lengths of the words $w_1,\ldots,w_d$. Thus, we have padded the shortest words with leading zeroes in order to obtain $d$ words of the same length. The obtained $d$-tuple can now be seen as a word over the $d$-dimensional alphabet $\Z^d$, that is, an element of $(\Z^d)^*$. For a $d$-tuple of numeration systems $\bU=(U_1,\ldots,U_d)$, we set $\bA_{\bU}=\dcart{A_{U_1}}{A_{U_d}}$. Accordingly, we extend the definition of the maps $\rep_U$ and $\val_U$ as follows:
\[
	\rep_{\bU}\colon \N^d \to A_{\bU}^*,\ 
	\duple{n_1}{n_d}
	\mapsto 
	\duple{\rep_{U_1}(n_1)}{\rep_{U_d}(n_d)}^0
\]
and 
\[
	\val_{\bU} \colon (\Z^*)^d\to \Z^d,\
	\duple{w_1}{w_d}
	\mapsto
	\duple{\val_{U_1}(w_1)}{\val_{U_d}(w_d)}.
\]
For an easier writing, for all $a\in\Z$ (considered as a letter), we write $\overline{a}$ instead of $-a$, and we extend this notation to words over $\Z$ by setting $\overline{uv}=\overline{u}\,\overline{v}$ for $u,v\in \Z^*$. 

\begin{example}
In dimension $2$, we have $\rep_{(U,U)}\couple{5}{9}=\couple{12}{101}^0=\couple{012}{101}=\couple{0}{1}\couple{1}{0}\couple{2}{1}$ and $\val_{(U,U)}\couple{021}{0001}=\couple{7}{1}$.
Moreover, we have $\val_{(U,U)}\couple{\overline{3}1\overline{1}}{011}=\couple{-22}{4}$.
\end{example}

\section{Multidimensional regular sequences}\label{Section : Multi (U,K)-regular sequences}

\begin{definition}
\label{Def : regular}
Let $\bU=(U_1,\ldots,U_d)$ be a $d$-tuple of numeration systems. A sequence $f\colon\N^d\to\K$ is called \emph{$(\bU,\K)$-regular} if the series $\sum_{\bn \in \N^d} f(\bn)\,\rep_{\bU}(\bn)$ is $\K$-recognizable.
\end{definition}

\begin{example}
Since the series defined by \eqref{Eq : series-ex} is $\N$-recognizable, the sequence 
\begin{align}
\label{Eq : f-ex}
	f\colon\N\to \N,\  n\mapsto 
	\begin{cases}
	3^{|\rep_U(n)|_1}\, (1+ 3^{-g(\rep_U(n))})\, 2^{|\rep_U(n)|_2} 
	& \text{if } g(\rep_U(n))\neq0\\
	3^{|\rep_U(n)|_1}\, 2^{|\rep_U(n)|_2} 
	& \text{otherwise}
	\end{cases}
\end{align}
is $(U,\N)$-regular. 
\end{example}

The aim of this work is to prove the following result, which testifies to the robustness of the notion of $(\bU,\K)$-regular sequences.

\begin{theorem}
\label{Thm : EquivU}
For $f\colon\N^d\to\K$ and a $d$-tuple $\bU=(U_1,\ldots,U_d)$ of Pisot numeration systems, the following assertions are equivalent.
\begin{enumerate}
\item The sequence $f$ is $(\bU,\K)$-regular.
\item For all finite alphabets $\bA\subset \Z^d$, the series $\sum_{\bw\in\bA^*} f(||\val_{\bU}(\bw)||)\,\bw$ is $\K$-recognizable.
\item The series $\sum_{\bw\in\bA_{\bU}^*} f(\val_{\bU}(\bw))\,\bw$ is $\K$-recognizable.
\item There exists a $\K$-recognizable series $S \colon \bA_{\bU}^*\to \K$ such that for all $\bn\in \N^d$, $(S,\rep_{\bU}(\bn))=f(\bn)$.
\end{enumerate}
\end{theorem}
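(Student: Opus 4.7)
My plan is to prove the cycle of implications $(2) \Rightarrow (3) \Rightarrow (4) \Rightarrow (1) \Rightarrow (2)$, with the first three links being essentially formal and only the last demanding the technology of the normalizer.

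The three easy implications go as follows. For $(2) \Rightarrow (3)$, I would specialize to $\bA = \bA_{\bU}$; then every letter is non-negative, so $||\val_{\bU}(\bw)|| = \val_{\bU}(\bw)$ for $\bw \in \bA_{\bU}^*$ and (3) falls out. For $(3) \Rightarrow (4)$, the series in (3) already satisfies the required property since $\val_{\bU} \circ \rep_{\bU} = \mathrm{id}$ on $\N^d$. For $(4) \Rightarrow (1)$, I would take the Hadamard product of $S$ with the characteristic series $\underline{\rep_{\bU}(\N^d)}$. The numeration language $\rep_{U_i}(\N)$ is regular for each Pisot system $U_i$~\cite{FrougnySolomyak1996}, and its $d$-dimensional zero-padded counterpart $\rep_{\bU}(\N^d) \subset \bA_{\bU}^*$ is a regular language too (verified by a straightforward product automaton respecting the padding convention). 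Its characteristic series is therefore $\K$-recognizable, and Hadamard products preserve $\K$-recognizability, so $\sum_{\bn \in \N^d} f(\bn)\,\rep_{\bU}(\bn) = S \odot \underline{\rep_{\bU}(\N^d)}$ is $\K$-recognizable.

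The heart of the argument is $(1) \Rightarrow (2)$. Starting from a $\K$-automaton $\mathcal{A}$ recognizing $\sum_{\bn \in \N^d} f(\bn)\,\rep_{\bU}(\bn)$ and an arbitrary finite alphabet $\bA \subset \Z^d$ (possibly containing negative letters), I must build a $\K$-automaton recognizing $\sum_{\bw \in \bA^*} f(||\val_{\bU}(\bw)||)\,\bw$. The machinery for this is the \emph{extended multidimensional normalizer} that Section~\ref{Section : Normalization} will provide: a $2d$-tape Boolean automaton $\mathcal{N}$ whose accepted pairs $\couple{\bu}{\bv}^0$ are exactly those with $\bu \in \bA^*$ and $\bv = \rep_{\bU}(||\val_{\bU}(\bu)||)$. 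The plan is then to form the \emph{ad hoc product} (announced in the abstract and to be defined in Section~\ref{Section : Generalization}) between the $2d$-tape automaton $\mathcal{N}$ and the $\K$-automaton $\mathcal{A}$: the states are pairs $(q_{\mathcal{N}}, q_{\mathcal{A}})$, and a transition on input letter $\ba \in \bA$ simulates a step of $\mathcal{N}$ reading $\couple{\ba}{\bb}$ on both tapes while simultaneously advancing $\mathcal{A}$ on $\bb$, carrying the weight of $\mathcal{A}$'s step. Summing over the (finitely many) admissible $\bb$ at each step yields a well-defined $\K$-transition, and the total weight of a word $\bw$ then sums over all accepting $\mathcal{N}$-runs on $\couple{\bw}{\bv}^0$ the weight of $\mathcal{A}$ on $\bv$, which collapses by determinism of $\rep_{\bU}$ to $f(||\val_{\bU}(\bw)||)$.

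The main obstacles I anticipate are twofold. First, reconciling the padding conventions: words $\bw \in \bA^*$ have no preferred length, while $\mathcal{N}$ is designed to read pairs of equal length after zero-padding, and the lengths of $\bw$ and $\rep_{\bU}(||\val_{\bU}(\bw)||)$ can differ (the greedy representation can be considerably shorter, and with negative digits even a long $\bw$ may evaluate to a small non-negative integer). This forces the product construction to allow $\mathcal{N}$ to run for extra steps reading leading $\bz$'s on the $\bu$-track against a prefix of $\bv$, which must be absorbed through suitable initial handling or via letting $\mathcal{A}$ read leading zeroes freely. Second, care is needed to keep the sum defining each transition of the product finite — this is guaranteed by the fact that, given an input letter, only boundedly many letters $\bb$ can appear in a one-step extension of a partial normalization, a finiteness that ultimately rests on the bounded digit alphabet $\bA_{\bU}$ and on the structural properties of Pisot normalizers.
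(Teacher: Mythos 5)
Your overall architecture coincides with the paper's: the same cycle of implications, the same three easy links (specializing to $\bA_{\bU}$, the identity $\val_{\bU}\circ\rep_{\bU}=\mathrm{id}$ on $\N^d$, and the Hadamard product with $\underline{\rep_{\bU}(\N^d)}$ using regularity of Pisot numeration languages), and the same strategy for $(1)\Rightarrow(2)$ of running the extended multidimensional normalizer in product with a $\K$-automaton recognizing $G_f$. The two ``obstacles'' you flag are the right ones, and your worry about finiteness of the sum over admissible second-tape letters $\bb$ is a non-issue (the second tape ranges over the finite alphabet $\bA_{\bU}\cup\overline{\bA_{\bU}}$; the paper instead secures well-definedness of the product's weight function via the uniqueness of $\bb$ given the source state, target state and input letter).

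However, your first obstacle --- the length mismatch, measured in the paper by the delay $d_{\bU}(\bw)=|\nu_{\bU}(\bw)|-|\bw|$ --- is named but not resolved, and resolving it is the bulk of the actual proof. The naive product computes the correct coefficient only when $d_{\bU}(\bw)=0$. When $d_{\bU}(\bw)<0$ the normalizer's output starts with $\bz$'s, which the automaton for $G_f$ annihilates (greedy representations never begin with $\bz$, so that automaton is arranged to give such words weight $0$); the fix is not to ``let $\mathcal{A}$ read leading zeroes freely'' but to graft new weight-$1$ edges that hold $\mathcal{A}$ at its initial state while the normalizer consumes the input letters whose normalized output is still $\bz$. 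When $d_{\bU}(\bw)>0$ the normalizer accepts \emph{no} pair with first component $\bw$ at all --- it only accepts one with first component $\bz^{d_{\bU}(\bw)}\bw$ --- so a letter-synchronous product reading $\bw$ produces weight $0$; the paper's fix is a fresh initial state $\alpha$ whose outgoing edge labeled $\ba$ into a product state $q$ carries, as weight, the sum of the weights of all paths labeled $\bz^{\ell}\ba$ from the old initial state to $q$, which requires a separate lemma that $\ell$ is determined by $\ba$ and $q$ (and bounded by the number of states). One must then verify, by a case analysis on the sign of the delay, that for each $\bw$ exactly one of the three resulting contributions is nonzero and equals $f(||\val_{\bU}(\bw)||)$. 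None of this machinery is present in your sketch, so as written the hard implication has a genuine gap, even though the approach itself is the correct one.
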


\section{Multidimensional normalization and delay}\label{Section : Normalization}

For a numeration system $U$, the \emph{normalization} $\nu_U$ is the partial function from $\Z^*$ to $A_U^*$ which maps a word $w\in \Z^*$ with $\val_U(w)\ge 0$ to the word $\rep_U (\val_U (w))$, see~\cite{FrougnySolomyak1996}. 

\begin{theorem}[\cite{FrougnySolomyak1996}]
\label{Thm : Norm-FrougnySolo}
For any Pisot numeration system $U$ and any finite alphabet $A\subset\Z$, there exists a computable DFA accepting the language $\{\couple{u}{v}\in (A\times A_U)^*\colon v\in 0^*\nu_U(u)\}$.
\end{theorem}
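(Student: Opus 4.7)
The plan is to reduce the statement to two regular-language facts: first, for a Pisot numeration system, $\rep_U(\N)$ is a regular language (stated just before in the paper, due to Frougny--Solomyak); and second, for every finite alphabet $B \subset \Z$, the language $Z_B = \{w \in B^*: \val_U(w) = 0\}$ is regular with a computable DFA. Assuming the second fact, the construction is quick: two aligned words $u, v$ of equal length satisfy $\val_U(u) = \val_U(v)$ exactly when the letter-wise difference $u-v$, which is a word over $B = \{a - b : a \in A, b \in A_U\}$, has value $0$. So the desired language is the intersection of $\{\couple{u}{v} : u - v \in Z_B\}$ with $\{\couple{u}{v} : v \in 0^* \rep_U(\N)\}$, both effectively regular.

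To construct a DFA for $Z_B$, let $\beta$ be the Pisot number governing $U$, of degree $k$ and with Galois conjugates $\beta_2, \ldots, \beta_k$ of modulus strictly less than $1$. I would process a word $w = a_\ell \cdots a_0 \in B^*$ with a carry automaton: after reading a prefix, the state records an element of $\Z[\beta]$ encoding how the partial sum is evolving, and the linear recurrence satisfied by $U$ translates into the update rule of the carry under each new digit from $B$. The Pisot property ensures that the reachable carries form a \emph{bounded} subset of the lattice $\Z[\beta]$ (viewed in $\mathbb{C}^k$ via the Galois embeddings), hence a finite set, which is effectively computable from $\max|B|$ and the recurrence coefficients. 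Acceptance is the condition that the final carry equals $0$.

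Once the DFA for $Z_B$ is built, taking a product with the computable DFA for $0^* \rep_U(\N)$ on the second component of $\couple{u}{v}$ yields the desired $2$-tape automaton, and every input in $(A \times A_U)^*$ is automatically length-synchronized. Effectiveness holds throughout: the alphabet $B$ is explicit, the carry-state bound depends computably on $\max |B|$ and the recurrence, and a DFA for $\rep_U(\N)$ is known to be computable from $U$ under the Pisot assumption.

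The main obstacle is establishing the finiteness of the state space for $Z_B$: a priori, partial evaluations of $\val_U$ grow like $\beta^\ell$ and the carry need not be bounded as an integer. What rescues the construction is precisely the Pisot hypothesis, which forces the Galois conjugate components of the carry to contract and the whole carry to land in a bounded, hence finite, subset of a lattice. This is the heart of the Frougny--Solomyak argument; without the Pisot condition, even the regularity of $Z_B$ can fail.
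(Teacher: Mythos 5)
The paper offers no proof of this statement: it is imported wholesale from Frougny--Solomyak (with Bruy\`ere--Hansel cited for an alternative construction), so there is nothing internal to compare against. Your reduction is the right one and matches the classical argument: since the two tracks of a word in $(A\times A_U)^*$ are automatically length-synchronized, $v\in 0^*\nu_U(u)$ is equivalent to the conjunction of $v\in 0^*\rep_U(\N)$ and $\val_U(u-v)=0$, and both conditions are effectively regular once the zero-language $Z_B$ is handled; the heart of Frougny--Solomyak is indeed the recognizability of $Z_B$ by a ``carry'' automaton over a finite subset of the lattice $\Z[\beta]\cong\Z^k$. (One small caveat: in the cited source the regularity of $0^*\rep_U(\N)$ is itself typically \emph{derived from} the normalization theorem, so if you invoke it as a black box you should check that its proof does not presuppose what you are proving; alternatively it can be recovered from $Z_B$-recognizability, since the padded greedy word is the radix-maximal word among equal-length, equal-value words over $A_U$.)

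Two points in your sketch of the $Z_B$ automaton are imprecise, and they sit exactly where the work lies. First, the set of \emph{all} reachable carries is not bounded: the Pisot hypothesis contracts the Galois-conjugate embeddings of the carry, but the archimedean component of the carry of an arbitrary word of $B^*$ still grows like $\beta^{\ell}$. What is bounded --- in every embedding, hence finite and computable in the lattice --- is the set of carries reachable along prefixes of words of $Z_B$: the archimedean bound comes from the zero-sum constraint together with $U(i)\asymp\beta^i$, not from the Pisot contraction alone. All other carries must be collapsed into a sink state, which is legitimate because a carry outside the computable bounded region can never be completed to value $0$. Second, acceptance is not ``the final carry equals $0$'' if the carry tracks $\sum_i b_i\beta^i$: the theorem concerns $\val_U$, and the initial conditions of $U$ matter. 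For instance, with $U=(1,2,3,5,8,\dots)$ the word $\overline{1}2$ satisfies $\val_U(\overline{1}2)=-2+2=0$ while $-\varphi+2\neq 0$. The correct acceptance condition is that the final carry lies in the kernel of the $\Z$-linear functional on $\Z[\beta]$ induced by evaluation against $U$. With these two adjustments your outline is precisely the standard Frougny--Solomyak proof.
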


The previous result is usually referred to by saying that the normalization from the alphabet $A$ is computable by a finite automaton. A different construction of such a $2$-tape automaton can be found in~\cite{BruyereHansel1997}. 

\begin{example}
\label{Ex : ZeroConvNormPhiCarre}
The language $\{\couple{u}{v}\in (\{0,1,2\}^2)^*\colon v\in 0^*\nu_U(u)\}$ is accepted by the DFA depicted in Figure~\ref{Fig : NormaliserPhiCarre}. 
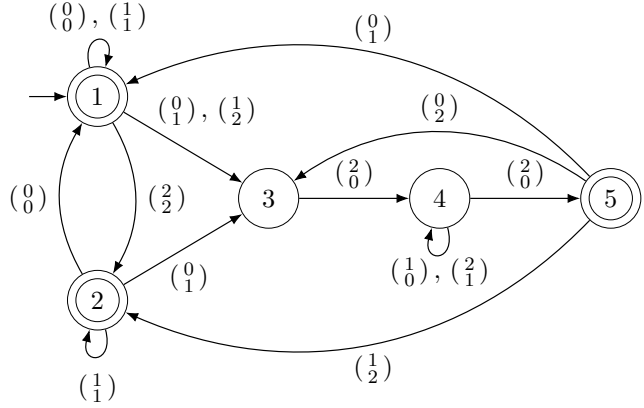
\begin{figure}[htb]
\begin{center}
\scalebox{0.9}{\begin{tikzpicture}
\tikzstyle{every node}=[-Latex, shape=circle, fill=none, draw=black,minimum size=25pt, inner sep=0pt]
\node(1) at (0,3) {$1$};
\node(2) at (0,0) {$2$};
\node(3) at (2.5,1.5) {$3$};
\node(4) at (5,1.5) {$4$};
\node(5) at (7.5,1.5) {$5$};
\tikzstyle{every node}=[shape=circle, fill=none, draw=black,
minimum size=18pt, inner sep=2pt]
\node(1f) at (0,0) {};
\node(2f) at (0,3) {};
\node(5f) at (7.5,1.5) {};
\tikzstyle{every path}=[color=black, line width=0.5 pt]
\tikzstyle{every node}=[shape=circle, minimum size=5pt, inner sep=2pt]
\draw [-Latex] (-1,3) to node {} (1); 
\draw [-Latex] (1) to [loop above,looseness=6] node [above=-0.5] {$\couple{0}{0},\, \couple{1}{1}$} (1);
\draw [-Latex] (2) to [loop below,looseness=6] node [below] {$ \couple{1}{1}$} (2);
\draw [-Latex] (1) to [bend left] node [right] {$\couple{2}{2}$} (2);
\draw [-Latex] (2) to [bend left] node [left] {$\couple{0}{0}$} (1);
\draw [-Latex] (1) to node [above=-0.1, pos=0.7] {$\couple{0}{1},\,\couple{1}{2}$} (3);
\draw [-Latex] (2) to node [below,pos=0.55] {$\couple{0}{1}$} (3);
\draw [-Latex] (3) to node [above=-0.1] {$\couple{2}{0}$} (4);
\draw [-Latex] (4) to node [above=-0.1] {$\couple{2}{0}$} (5);
\draw [-Latex] (4) to [loop below,looseness=6] node [below=-0.6] {$\couple{1}{0},\, \couple{2}{1}$} (4);
\draw [-Latex] (5) to [bend right=35] node [above=-0.1] {$\couple{0}{2}$} (3);
\draw [-Latex] (5) to [bend left=35] node [below=-0.1] {$\couple{1}{2}$} (2);
\draw [-Latex] (5) to [bend right=35] node [above=-0.1] {$\couple{0}{1}$} (1);
\end{tikzpicture}}
\end{center}
\caption{A DFA computing the normalization $\nu_U$ from the alphabet $\{0,1,2\}$ in the Bertrand numeration system $U$ associated with the Pisot number $\frac{3+\sqrt{5}}{2}$.}
\label{Fig : NormaliserPhiCarre}
\end{figure}
\end{example}

Let us extend the definition of the normalization in order to obtain a total function. We keep the same vocabulary and notation. For a numeration system $U$, we let $\overline{A_U}=\{\overline{a}\colon a\in A_U\}$ and we define the  \emph{(extended) normalization} as the function
\[
	\nu_U \colon \Z^* \to (A_U\cup \overline{A_U})^*,\ 
	w\mapsto 
	\begin{cases}
	\rep_U(\val_U(w)) 
	& \text{if } \val_U (w)\ge 0\\
	\overline{\rep_U(-\val_U(w))} 
	& \text{otherwise.} 
	\end{cases}
\]
Note that $A_U\cap\overline{A_U}=\{0\}$. Also note that $\nu_U(\Z^*)\subseteq A_U^*\cup (\overline{A_U})^*$.

\begin{example}
We have $\val_U(\overline{2}\overline{2})=-8$, so $\nu_U(\overline{2}\overline{2})=\overline{\rep_U(8)}=\overline{1}00$.
\end{example}

We generalize Theorem~\ref{Thm : Norm-FrougnySolo} to the normalization extended to negative values. 

\begin{theorem}
\label{The : Norm-With-Neg}
For any Pisot numeration system $U$ and any finite alphabet $A\subset\Z$, there exists a computable DFA accepting the language $\{\couple{u}{v}\in (A\times (A_U\cup \overline{A_U}))^* \colon v\in 0^*\nu_U(u)\}$.
\end{theorem}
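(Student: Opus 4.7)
The plan is to reduce Theorem~\ref{The : Norm-With-Neg} to Theorem~\ref{Thm : Norm-FrougnySolo} by exploiting the sign-flip symmetry inherent in the definition of the extended normalization. Denote by $L$ the target language. Since $\nu_U(\Z^*)\subseteq A_U^*\cup(\overline{A_U})^*$, every pair $\couple{u}{v}\in L$ satisfies either $v\in A_U^*$ (forcing $\val_U(u)\ge 0$) or $v\in(\overline{A_U})^*$ (forcing $\val_U(u)\le 0$), the two cases overlapping only when $v\in 0^*$ and $\val_U(u)=0$, thanks to $A_U\cap\overline{A_U}=\{0\}$. Hence $L=L^+\cup L^-$ where
\[
L^+=\{\couple{u}{v}\in(A\times A_U)^*:v\in 0^*\nu_U(u)\}
\quad\text{and}\quad
L^-=\{\couple{u}{v}\in(A\times\overline{A_U})^*:v\in 0^*\nu_U(u)\}.
\]

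For $L^+$, the defining condition $v\in 0^*\nu_U(u)$ coincides with the Frougny--Solomyak normalization condition, since $v\in A_U^*$ forces $\val_U(u)\ge 0$; Theorem~\ref{Thm : Norm-FrougnySolo} applied to the alphabet $A$ immediately supplies a computable DFA $\mathcal{N}^+$ for $L^+$. For $L^-$, I would apply Theorem~\ref{Thm : Norm-FrougnySolo} to the finite alphabet $\overline{A}=\{\overline{a}:a\in A\}\subset\Z$, obtaining a computable DFA $\mathcal{M}$ for $\{\couple{u'}{v'}\in(\overline{A}\times A_U)^*:v'\in 0^*\nu_U(u')\}$. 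I would then transform $\mathcal{M}$ into a DFA $\mathcal{N}^-$ over $A\times\overline{A_U}$ by bijectively relabeling each transition $\couple{a}{b}\in\overline{A}\times A_U$ as $\couple{\overline{a}}{\overline{b}}$. Using $\val_U(\overline{w})=-\val_U(w)$ for all $w\in\Z^*$, one verifies that $\mathcal{N}^-$ accepts $\couple{u}{v}$ iff $\mathcal{M}$ accepts $\couple{\overline{u}}{\overline{v}}$, iff $\val_U(u)\le 0$ and $v\in 0^*\overline{\rep_U(-\val_U(u))}$, which is precisely the restriction of $v\in 0^*\nu_U(u)$ to $v\in(\overline{A_U})^*$. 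So $\mathcal{N}^-$ recognizes $L^-$.

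Regarded as automata over the common alphabet $A\times(A_U\cup\overline{A_U})$, the DFAs $\mathcal{N}^+$ and $\mathcal{N}^-$ can be combined via the effective closure of regular languages under union (followed by determinization), yielding a computable DFA for $L$. There is no substantive obstacle here: the whole argument is a clean reduction to the positive-value theorem, and the only points requiring some care are the sign-based decomposition $L=L^+\cup L^-$ and the verification that the sign-flip relabeling correctly transforms $\mathcal{M}$ into a recognizer for $L^-$.
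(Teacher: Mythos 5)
Your proposal is correct and takes essentially the same approach as the paper: both reduce the negative-value case to Theorem~\ref{Thm : Norm-FrougnySolo} via the sign-flip relabeling $\couple{a}{b}\mapsto\couple{\overline{a}}{\overline{b}}$ and then combine the two recognizers by the effective closure under union. The only cosmetic difference is that the paper first enlarges $A$ to a symmetric alphabet $\{-d,\ldots,d\}$ and deletes the extra transitions at the end, whereas you apply the theorem to $\overline{A}$ directly.
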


\begin{proof}
Let $A'\subset \Z$ be a symmetric alphabet of the form $\{-d,\ldots,d\}$ containing $A$. By Theorem~\ref{Thm : Norm-FrougnySolo}, we can compute a DFA  $\mathcal{N}_1=(Q,i,T,A'\times A_U,\delta_1)$ accepting the language $\{\couple{u}{v}\in (A'\times A_U)^*\colon v\in 0^*\nu_U(u)\}$. Define another DFA $\mathcal{N}_2=(Q,i,T,A'\times \overline{A_U},\delta_2)$ where 
\[
	\delta_2 \colon Q \times (A'\times \overline{A_U}) \to Q,\ 
	(q,\couple{a}{b})\mapsto \delta_1(q,	\couple{\overline{a}}{\overline{b}}).
\]
By construction, $\mathcal{N}_2$ accepts the language $\{\couple{u}{v}\in (A'\times \overline{A_U})^*\colon v\in 0^*\nu_U(u)\}$. Therefore, we can compute a third DFA $\mathcal{N}$ accepting the union of the previous two languages, which is equal to $\{\couple{u}{v}\in (A'\times (A_U \cup \overline{A_U}))^* \colon v\in 0^*\nu_U(u)\}$. Finally, by deleting all transitions whose label has a first component not belonging to $A$, we obtain the desired DFA.
\end{proof}

The definition of the normalization can be generalized to the multidimensional setting by applying the normalization component-wise and padding with leading zeroes as defined in Section~\ref{Sec : NumSys}. For a $d$-tuple of numeration systems $\bU=(U_1,\ldots,U_d)$, we let $\overline{\bA_{\bU}}=\dcart{\overline{A_{U_1}}}{\overline{A_{U_d}}}$ and we define the \emph{($d$-dimensional extended) normalization} by
\[
	\nu_{\bU} \colon (\Z^d)^* \to (\bA_{\bU}\cup \overline{\bA_{\bU}})^*,\ 
	\duple{w_1}{w_d}
	\mapsto 
	\duple{\nu_{U_1}(w_1)}{\nu_{U_d}(w_d)}^0.
\]

\begin{example}
We have $
	\nu_{(U,U)}\couple{\overline{22}}{10}
	= \couple{\nu_U(\overline{22})}{\nu_U(10)}^0	
	= \couple{\overline{1}00}{10}^0
	= \couple{\overline{1}00}{010}
	= \couple{\overline{1}}{0}\couple{0}{1}\couple{0}{0}$.
Note that $\nu_{(U,U)}\couple{\overline{22}}{10}\notin \bA_{(U,U)}^*\cup(\overline{\bA_{(U,U)}})^*$ since $\val_U(\overline{22})<0$ while $\val_U(10)>0$.
\end{example}

Let us define an operation on $2$-tape automata in order to build multidimensional normalizers.

\begin{definition}
\label{Def : Multidim-Normalizer}
For each $j\in\{1,\ldots,d\}$, let $\mathcal{A}_j=(Q_j, i_j, T_j, A_i\times B_i, \delta_j)$ be a $2$-tape automaton. We define a $2d$-tape automaton 
$\bigotimes_{j=1}^d \mathcal{A}_j = (Q, i, T, \bA\times \bB, \delta)$ by 
\begin{itemize}
\item $Q= \dcart{Q_1}{Q_d}$
\item $i=(i_1,\ldots,i_d)$
\item $T = \dcart{T_1}{T_d}$
\item $\bA=\dcart{A_1}{A_d}$ and $\bB=\dcart{B_1}{B_d}$
\item $\delta\colon Q\times(\bA\times\bB)\to Q,\ 
	\left((q_1,\ldots,q_d),\couple{\ba}{\bb}\right) 		\mapsto 
	\left(
	\delta_1\left(q_1,\couple{a_1}{b_1}\right), 
	\ldots, 
	\delta_d\left(q_d,\couple{a_d}{b_d}\right) 			\right)$
where 
\[
	\ba= \left(\begin{smallmatrix}
		a_1 \\ \vdots\\ a_d
		\end{smallmatrix}\right) 
	\text{ and }
	\bb=\left(\begin{smallmatrix}
		b_1 \\ \vdots\\ b_d
		\end{smallmatrix}\right).
\]
\end{itemize}
\end{definition}

The automaton $\bigotimes_{j=1}^d \mathcal{A}_j$ can also be viewed as a $2$-tape automaton where each component would be a $d$-tuple. We will make no distinction between these two equivalent points of view.

The following result generalizes Theorem~\ref{The : Norm-With-Neg} to the multidimensional setting.

\begin{theorem}
\label{Theorem : MultidimNorm}
For any $d$-tuple of Pisot numeration systems $\bU=(U_1,\ldots,U_d)$ and any finite alphabet $\bA\subset\Z^d$, there exists a computable DFA accepting the language $\{\couple{\bu}{\bv}\in (\bA \times (\bA_{\bU}\cup\overline{\bA_{\bU}}))^*\colon \bv\in \bz^*\nu_{\bU}(\bu)\}$. 
\end{theorem}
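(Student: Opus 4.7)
The plan is to reduce the multidimensional normalizer to the one-dimensional extended normalizer of Theorem~\ref{The : Norm-With-Neg} via the tensor-product construction of Definition~\ref{Def : Multidim-Normalizer}.

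For each $j\in\{1,\ldots,d\}$, let $A_j$ denote the projection of $\bA$ onto its $j$-th coordinate, which is a finite alphabet of $\Z$. Since $U_j$ is a Pisot numeration system, Theorem~\ref{The : Norm-With-Neg} provides a computable DFA $\mathcal{N}_j$ accepting
\[
	L_j=\left\{\couple{u}{v}\in (A_j\times (A_{U_j}\cup\overline{A_{U_j}}))^*\colon v\in 0^*\nu_{U_j}(u)\right\}.
\]
Form the $2d$-tape DFA $\mathcal{N}=\bigotimes_{j=1}^d \mathcal{N}_j$ according to Definition~\ref{Def : Multidim-Normalizer}, and then delete all transitions whose first component (a letter in $A_1\times\cdots\times A_d$) fails to lie in $\bA$, exactly as in the last step of the proof of Theorem~\ref{The : Norm-With-Neg}. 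Each of these steps is effective, so the resulting DFA is computable; it only remains to identify the language it accepts.

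By construction, $\mathcal{N}$ accepts a word $\couple{\bu}{\bv}$ with $\bu\in\bA^*$ if and only if, for every $j$, the pair $\couple{u_j}{v_j}$ belongs to $L_j$. The $2d$-tape framework already forces the rows $u_1,\ldots,u_d,v_1,\ldots,v_d$ to share a common length $m=|\bu|=|\bv|$. Writing $\ell_j=|\nu_{U_j}(u_j)|$ and $\ell=\max_j\ell_j$, the conditions $v_j\in 0^*\nu_{U_j}(u_j)$ with $|v_j|=m$ become $v_j=0^{m-\ell}\cdot 0^{\ell-\ell_j}\nu_{U_j}(u_j)$. Stacking these equalities row by row yields $\bv=\bz^{m-\ell}\,\nu_{\bU}(\bu)$, i.e.\ $\bv\in\bz^*\nu_{\bU}(\bu)$. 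Reading the same computation in reverse shows that any $\bv\in\bz^*\nu_{\bU}(\bu)$ satisfies the per-row membership in $L_j$, so the two descriptions of the language coincide.

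There is no genuine obstacle: the construction is dictated by Definition~\ref{Def : Multidim-Normalizer}, and the correctness proof reduces to the bookkeeping step above. The only point requiring a little care is that the one-dimensional normalizers may pad $\nu_{U_j}(u_j)$ to different internal lengths $\ell_j$; this discrepancy is absorbed by the extra zero prefix $0^{m-\ell_j}$ on the $j$-th row, which the rigid row alignment imposed by the tensor product matches precisely with the leading-zero padding built into the definition of $\nu_{\bU}$.
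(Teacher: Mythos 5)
Your proposal is correct and follows essentially the same route as the paper: apply Theorem~\ref{The : Norm-With-Neg} componentwise, combine the resulting DFAs with the $\bigotimes$ construction of Definition~\ref{Def : Multidim-Normalizer}, and prune the transitions whose first component lies outside $\bA$. Your explicit length bookkeeping (matching the per-row paddings $0^{m-\ell_j}$ with the leading-zero padding built into $\nu_{\bU}$) is a slightly more detailed justification of the step the paper dispatches with ``$\bigotimes_{j=1}^d\mathcal{N}_j$ accepts $\couple{\bu}{\bv}$ iff each $\mathcal{N}_j$ accepts $\couple{u_j}{v_j}$,'' but it is the same argument.
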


\begin{proof}
Let $A_1,\ldots,A_d$ be finite alphabets of integers such that $\bA\subseteq \dcart{A_1}{A_d}$. By Theorem~\ref{The : Norm-With-Neg}, for each $j\in\{1,\ldots,d\}$, we can compute a DFA $\mathcal{N}_j$ accepting the language $\{\couple{u}{v}\in (A_j\times (A_{U_j} \cup \overline{A_{U_j}}))^*\colon v\in 0^*\nu_{U_j}(u)\}$. Then the DFA $\bigotimes_{j=1}^d \mathcal{N}_j$ accepts the language $\{\couple{\bu}{\bv}\in ((\dcart{A_1}{A_d}) \times (\bA_{\bU}\cup\overline{\bA_{\bU}}))^*\colon \bv\in \bz^*\nu_{\bU}(\bu)\}$. Indeed, let $\couple{\bu}{\bv}\in ((\dcart{A_1}{A_d}) \times (\bA_{\bU}\cup\overline{\bA_{\bU}}))^*$ and write
\[
	\bu=\duple{u_1}{u_d}
	\quad\text{and}\quad
	\bv=\duple{v_1}{v_d}.
\]
By definition, the $2d$-tape automaton $\bigotimes_{j=1}^d \mathcal{N}_j$ accepts $\couple{\bu}{\bv}$ if and only if for every $j\in\{1,\ldots,d\}$, $\mathcal{N}_j$ accepts $\couple{u_j}{v_j}$. Finally, we obtain the desired DFA by removing from $\bigotimes_{j=1}^d \mathcal{N}_j$ all transitions with a label $\couple{\ba}{\bb}$ whose first component $\ba$ does not belong to $\bA$.
\end{proof}

For a $d$-tuple of numeration systems $\bU=(U_1,\ldots,U_d)$ and a finite alphabet $\bA\subset\Z^d$ such that the language $\{\couple{\bu}{\bv}\in (\bA\times (\bA_{\bU} \cup \overline{\bA_{\bU}}))^* \colon \bv\in \bz^*\nu_{\bU}(\bu)\}$ is regular, we define the \emph{($d$-dimensional extended) normalizer} $\mathcal{N}_{\bU,\bA}$ to be the trim minimal automaton of this language. When the context is clear, we set 
\[
	\mathcal{N}_{\bU,\bA}
	=(Q_{\mathcal{N}}, 
	i_{\mathcal{N}}, 
	T_{\mathcal{N}}, 
	\bA\times (\bA_{\bU}\cup\overline{\bA_{\bU}}),
	\delta_{\mathcal{N}}).
\] 
Moreover, when $\bA=\bA_{\bU}$ we simply write $\mathcal{N}_{\bU}$ instead of $\mathcal{N}_{\bU,\bA_{\bU}}$.

The normalizer $\mathcal{N}_{\bU,\bA}$ is at the core of the reasoning leading to the proof of Theorem~\ref{Thm : EquivU}. Let us make two useful observations. First, we note that the initial state is also final since the empty word is accepted by $\mathcal{N}_{\bU,\bA}$. Second, the following lemma is a direct consequence of the uniqueness of normalized representations.

\begin{lemma}
\label{Lem : DeterministicNormaliser-Dimd}
For any two states $q$ and $q'$ of the normalizer $\mathcal{N}_{\bU,\bA}$ and any letter $\ba\in \bA$, there exists at most one letter $\bb\in \bA_{\bU}\cup\overline{\bA_{\bU}}$ such that $(q,\couple{\ba}{\bb},q')$ is a transition in $\mathcal{N}_{\bU,\bA}$.
\end{lemma}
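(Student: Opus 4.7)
The plan is to argue by contradiction using the fact that the normalizer $\mathcal{N}_{\bU,\bA}$ is the trim minimal automaton of the language $\{\couple{\bu}{\bv}\in (\bA\times (\bA_{\bU} \cup \overline{\bA_{\bU}}))^* \colon \bv\in \bz^*\nu_{\bU}(\bu)\}$. The trim property lets me extend a local pair of transitions to a pair of accepting paths sharing the same first component, and the language definition then forces the second components to agree.

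Concretely, I would suppose for contradiction that there exist two distinct letters $\bb_1,\bb_2\in\bA_{\bU}\cup\overline{\bA_{\bU}}$ such that both $(q,\couple{\ba}{\bb_1},q')$ and $(q,\couple{\ba}{\bb_2},q')$ are transitions in $\mathcal{N}_{\bU,\bA}$. Since $\mathcal{N}_{\bU,\bA}$ is trim, I can choose a word $\couple{\bu_1}{\bv_1}$ labelling a path from $i_{\mathcal{N}}$ to $q$ and a word $\couple{\bu_2}{\bv_2}$ labelling a path from $q'$ to some state in $T_{\mathcal{N}}$. Concatenating, the two words
\[
\couple{\bu_1\,\ba\,\bu_2}{\bv_1\,\bb_1\,\bv_2}
\quad\text{and}\quad
\couple{\bu_1\,\ba\,\bu_2}{\bv_1\,\bb_2\,\bv_2}
\]
are both accepted by $\mathcal{N}_{\bU,\bA}$, so by the definition of its language, both $\bv_1\bb_1\bv_2$ and $\bv_1\bb_2\bv_2$ belong to $\bz^*\nu_{\bU}(\bu_1\ba\bu_2)$.

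Here is the key point: the word $\nu_{\bU}(\bu_1\ba\bu_2)$ is a fixed element of $(\bA_{\bU}\cup\overline{\bA_{\bU}})^*$, and the two candidate words $\bv_1\bb_1\bv_2$ and $\bv_1\bb_2\bv_2$ have the same length, namely $|\bv_1|+1+|\bv_2|$. Therefore they coincide with the same padded word $\bz^{k}\nu_{\bU}(\bu_1\ba\bu_2)$ where $k=|\bv_1|+1+|\bv_2|-|\nu_{\bU}(\bu_1\ba\bu_2)|$, and hence $\bv_1\bb_1\bv_2=\bv_1\bb_2\bv_2$. Comparing the letters at position $|\bv_1|+1$ yields $\bb_1=\bb_2$, contradicting our assumption.

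The only mildly delicate point is the appeal to trimness to ensure that $q$ is reachable from $i_{\mathcal{N}}$ and $q'$ is co-reachable to a final state; once this is in hand, the argument is purely a statement about the uniqueness of the padded normalized representation of $\bu_1\ba\bu_2$. No deeper property of Pisot systems is used beyond what is already encoded in the definition of $\nu_{\bU}$.
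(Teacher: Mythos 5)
Your proof is correct and follows essentially the same route as the paper, which dismisses the lemma in one line as ``a direct consequence of the uniqueness of normalized representations'': your use of trimness to extend the two hypothetical transitions to accepting paths with a common first component, followed by the observation that $\bz^*\nu_{\bU}(\bu_1\ba\bu_2)$ contains at most one word of any given length, is precisely the rigorous unpacking of that remark.
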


\begin{example}
The DFA of Figure~\ref{Fig : NormaliserPhiCarre} is exactly the normalizer $\mathcal{N}_U$ for $d=1$. In $\mathcal{N}_U$, the word $22$ is not readable as a first component. Instead, the word $022$ is a possible first component because $|\nu_U(22)|-|22|=|100|-|22|=1$. Similarly, for $d=2$, the normalizer $\mathcal{N}_{(U,U)}$ accepts the word
\[
	\left(\begin{smallmatrix}
	022\\
	010\\
	100\\
	010
	\end{smallmatrix}\right).
\]
\end{example}

Motivated by the previous example, we introduce the following definition.

\begin{definition}
For all $\bw \in (\Z^d)^*$, the \emph{delay} of $\bw$ is the quantity $d_{\bU}(\bw)=|\nu_{\bU}(\bw)|-|\bw|$. 
\end{definition}

Note that for all 
\[
	\bw=\duple{w_1}{w_d}\in(\Z^d)^*,
\] 
we have $d_{\bU}(\bw)=\max\limits_{1 \le j \le d} d_{U_j}(w_j)$.

\begin{example}
We have $d_U(1\overline{2})=-1$ since $\nu_U(1\overline{2})=1$. Moreover, $d_U(22)=1$ and $d_U(10)=0$, so $d_{(U,U)}\couple{22}{10}=1$ and $d_{(U,U)}\couple{022}{010}=0$. 
\end{example}

\begin{remark}
\label{Rk : delay}
We analyze the behavior of the normalizer $\mathcal{N}_{\bU,\bA}$ on words whose first $d$ components are of the form $\bz^\ell\bw$ with $\ell\in\N$. Note that the delay $d_{\bU}(\bz^\ell\bw)$ is positive (resp.\ zero, negative) if $\ell<d_{\bU}(\bw)$ (resp.\ $\ell=d_{\bU}(\bw)$, $\ell>d_{\bU}(\bw)$). If $\ell\ge d_{\bU}(\bw)$, then there is a unique word accepted by $\mathcal{N}_{\bU,\bA}$ with first $d$ components $\bz^\ell\bw$, which is $\couple{\bz^\ell\bw}{\bz^{\ell-d_{\bU}(\bw)}\nu_{\bU}(\bw)}$. If $\ell< d_{\bU}(\bw)$, then $d_{\bU}(\bw)> 0$ and $\mathcal{N}_{\bU,\bA}$ accepts no word with first $d$ components $\bz^\ell\bw$. In particular, there is no accepted word with $\bw$ as first $d$ components if $d_{\bU}(\bw)> 0$. Note that in the case where the alphabet $\bA$ does not contain any negative digit (in any component), then $d_{\bU}(\bw)<0$ only when $\bw=\bz^{-d_{\bU}(\bw)}\bw'$ with $d_{\bU}(\bw')=0$.
\end{remark}

\begin{example}
Let $w_1=22$, $w_2=022$ and $w_3=0022$. We have $\nu_U(w_1)=\nu_U(w_2)=\nu_U(w_3)=100$. Therefore, $d_U(w_1)=1$, $d_U(w_2)=0$ and $d_U(w_3)=-1$. For all $\ell\in \N$, the normalizer $\mathcal{N}_U$ accepts the words $\couple{0^{\ell+1}w_1}{0^\ell 100}$, $\couple{0^\ell w_2}{0^\ell 100}$ and $\couple{0^\ell w_3}{0^{\ell+1} 100}$. 

Now, let $z_1=10$, $z_2=010$ and $z_3=0010$. Then $d_U(z_1)=0$, $d_U(z_2)=-1$ and $d_U(z_1)=-2$.  In dimension $2$, we have $d_{(U,U)}\couple{w_1}{z_1}=1$, $d_{(U,U)}\couple{w_2}{z_2}=0$, $d_{(U,U)}\couple{w_3}{z_3}=-1$. For all $\ell\in \N$, the normalizer $\mathcal{N}_{(U,U)}$ accepts the words 
\[
	\left(\begin{smallmatrix}
	0^{\ell+1}w_1\\
	0^{\ell+1}z_1\\
	0^{\ell}100\\
	0^{\ell+1}10
	\end{smallmatrix}\right),\
	\left(\begin{smallmatrix}
	0^{\ell}w_2\\
	0^{\ell}z_2\\
	0^{\ell}100\\
	0^{\ell+1}10
	\end{smallmatrix}\right)
	\quad\text{and}\quad
		\left(\begin{smallmatrix}
	0^{\ell}w_3\\
	0^{\ell}z_3\\
	0^{\ell+1}100\\
	0^{\ell+2}10
	\end{smallmatrix}\right).
\] 
\end{example}

\section{Characterizations of $(\bU,\K)$-regular sequences}
\label{Section : Generalization}

From now on, we consider a $d$-tuple of Pisot numeration systems $\bU=(U_1,\ldots,U_d)$, a $d$-dimensional sequence $f\colon\N^d\to \K$ and a finite alphabet $\bA\subset \Z^d$. In order to prove the implication $(1)\implies (2)$ of  Theorem~\ref{Thm : EquivU}, some intermediate results and definitions are needed. Without loss of generality, we assume that $\bz\in\bA$.

In what follows, we use the notation\footnote{The letters $G$ and $V$ stand for "Greedy" and "Value" respectively.}
\[
	G_f=\sum_{\bn\in\N^d} f(\bn)\,\rep_\bU(\bn) 		\quad \text{and} \quad 
	V_{f,\bA}=\sum_{\bw\in\bA^*} f(||\val_{\bU}(\bw)||)\, \bw.
\]
If $f$ is $(\bU,\K)$-regular, then by Proposition~\ref{Prop : LinkSeriesKAut}, there exists a $\K$-automaton recognizing the series $G_f$. In this case, we let
\[
	\mathcal{A}_G=(Q_G, I_G, T_G, \bA_{\bU}, E_G)
\] 
be such a $\K$-automaton. Without loss of generality, by adding a new state if needed, we assume that $\mathcal{A}_G$ has a unique initial state $i_G$, which has no incoming edge \cite{Eilenberg1974,Sakarovitch2009}. By definition of $G_f$, we have $(G_f,\bz^k\bw)=0$ for any $k>0$ and any word $\bw\in \bA_{\bU}^*$. Therefore, we make the additional assumption that the unique initial state of $\mathcal{A}_G$ 
has no outgoing edge of label $\bz$. 

In order to construct a $\K$-automaton recognizing $V_{f,\bA}$  starting from $\mathcal{A}_G$, we introduce an operation between a $2$-tape DFA and a $\K$-automaton. For any two finite alphabets $A,B$ and any coding $\sigma\colon B^*\to B^*$, that is, a letter-to-letter morphism, this operation applied on a $2$-tape automaton $\mathcal{A}$ over $A \times B$ and a $\K$-automaton $\mathcal{B}$ over $\sigma(B)$ produces a new $\K$-automaton $\mathcal{A}\circledast\mathcal{B}$ over $A$. Roughly speaking, for all words $u\in A^*$, if there exists a word $v\in B^*$ such that $\couple{u}{v}$ is accepted by $\mathcal{A}$ then the new $\K$-automaton $\mathcal{A}\circledast\mathcal{B}$ mimics the behavior of the $\K$-automaton $\mathcal{B}$ on $\sigma(v)$, that is, the weight of $u$ in $\mathcal{A}\circledast\mathcal{B}$ is the weight of $\sigma(v)$ in $\mathcal{B}$. Similar (but different) products of automata can be found in~\cite{Sakarovitch2009}. 

\begin{definition}
\label{Def : CCS}
Let $A$ and $B$ be two finite alphabets and let $\sigma\colon B^*\to B^*$ be a coding. Let $\mathcal{A}=(Q_\mathcal{A}, i_\mathcal{A}, T_\mathcal{A}, A\times B, \delta_\mathcal{A})$ be a DFA such that for any two states $q,q' \in Q_\mathcal{A}$ and any letter $a\in A$, there exists at most one letter $b\in B$ such that $\delta_\mathcal{A}(q,\couple{a}{b})=q'$. Let $\mathcal{B} = (Q_\mathcal{B}, I_\mathcal{B}, T_\mathcal{B}, \sigma(B), E_\mathcal{B})$ be a $\K$-automaton. With $\mathcal{A}$ and $\mathcal{B}$, we associate a new $\K$-automaton $\mathcal{A} \circledast \mathcal{B} = (Q, I, T, A, E)$ as follows. 
\begin{itemize}
\item $Q=  Q_\mathcal{A} \times Q_\mathcal{B}$.
\item $I\colon Q \to \K,\ (q,q')\mapsto 
\begin{cases}
	I_\mathcal{B}(q') &\text{if } q=i_\mathcal{A}\\
	0	&\text{else}.
\end{cases}$  
\item $T\colon Q \to \K,\ (q,q')\mapsto 
\begin{cases}
	T_\mathcal{B}(q') &\text{if } q\in T_{\mathcal{A}}\\
	0	&\text{else}.
\end{cases}$   
\item 
$E\colon Q \times A \times Q \to \K,\
((q_1,q'_1),a,(q_2,q_2'))\mapsto 
\begin{cases}
	E_\mathcal{B}(q_1',\sigma(b),q_2') &\text{if }\exists b\in B,\,  \delta_{\mathcal{A}}(q_1, \couple{a}{b})= q_2\\
	0	&\text{else}.
\end{cases}$
\end{itemize}
\end{definition}

Note that the extra assumption on $\mathcal{A}$ is required to ensure that $E$ is a well-defined function. An illustration of this operation is given by the black part of Figure~\ref{Fig : ProdSerieNormalisateur} in Example~\ref{Ex : ExempleEquiv}.

Consider the coding $\sigma\colon (\bA_{\bU} \cup \overline{\bA_{\bU}})^* \to \bA_{\bU}^*,\, \bw\mapsto||\bw||$. By Lemma~\ref{Lem : DeterministicNormaliser-Dimd}, the $\K$-automaton $\mathcal{N}_{\bU,\bA} \circledast \mathcal{A}_G$ (with respect to the coding $\sigma$) is well defined. Let $\mathcal{N}_{\bU,\bA}  \circledast \mathcal{A}_G= (Q, I, T, \bA, E)$. We now establish some properties of the latter $\K$-automaton. First, we note that $\mathcal{N}_{\bU,\bA}  \circledast \mathcal{A}_G$ has a unique initial state, which has no incoming edge. Then we prove a technical lemma.

\begin{lemma}
\label{Lem : unique-ell}
For all $q\in Q$, $\ba\in \bA$ and $\ell_1,\ell_2 \in\N$, if there exist two paths labeled by $\bz^{\ell_1} \ba$ and $\bz^{\ell_2}\ba$ from $(i_\mathcal{N},i_G)$ to $q$ in $\mathcal{N}_{\bU,\bA} \circledast \mathcal{A}_G$, then $\ell_1=\ell_2$.
\end{lemma}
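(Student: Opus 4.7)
The plan is to unpack what a path in the product $\mathcal{N}_{\bU,\bA}\circledast\mathcal{A}_G$ really encodes in terms of its two ``components'', and then exploit two facts: the value-based acceptance condition of the normalizer, and the hypothesis that $i_G$ admits no outgoing edge labeled $\bz$. Writing $q=(q_\mathcal{N},q_G)$, Definition~\ref{Def : CCS} shows that each of the two given paths from $(i_\mathcal{N},i_G)$ to $q$ with label $\bz^{\ell_i}\ba$ arises from a path in $\mathcal{N}_{\bU,\bA}$ from $i_\mathcal{N}$ to $q_\mathcal{N}$ reading $\couple{\bz^{\ell_i}\ba}{\bw_i}$ for some word $\bw_i\in(\bA_\bU\cup\overline{\bA_\bU})^{\ell_i+1}$, together with a path in $\mathcal{A}_G$ from $i_G$ to $q_G$ reading $||\bw_i||$.

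Next I would use the trimness of $\mathcal{N}_{\bU,\bA}$ to extend the normalizer path to a final state. Picking once and for all a label $\couple{\bu'}{\bv'}$ of such an extension from $q_\mathcal{N}$, we obtain that $\couple{\bz^{\ell_i}\ba\bu'}{\bw_i\bv'}$ is accepted by $\mathcal{N}_{\bU,\bA}$, hence $\bw_i\bv'\in\bz^*\nu_\bU(\bz^{\ell_i}\ba\bu')=\bz^*\nu_\bU(\ba\bu')$, the last equality reflecting that normalization is insensitive to leading zero letters in the input. Setting $\mathbf{x}=\nu_\bU(\ba\bu')$, this gives $\bw_i\bv'=\bz^{m_i}\mathbf{x}$ for some integer $m_i\ge 0$.

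To finish, I would invoke the hypothesis on $i_G$. Since $|\bw_i|=\ell_i+1\ge 1$, the first letter of $||\bw_i||$, and so that of $\bw_i$ itself, must differ from $\bz$; otherwise the path in $\mathcal{A}_G$ would traverse an outgoing $\bz$-edge from $i_G$. This rules out $m_i\ge 1$, since in that case $\bw_i$ would either sit entirely inside the $\bz^{m_i}$-prefix of $\bz^{m_i}\mathbf{x}$ or begin with one of its zeros. Therefore $m_i=0$ for both $i\in\{1,2\}$, giving $\bw_i\bv'=\mathbf{x}$; comparing lengths yields $\ell_i+1=|\mathbf{x}|-|\bv'|$, which is independent of $i$, so $\ell_1=\ell_2$. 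I anticipate that the main subtlety is the careful use of a \emph{common} extension $\couple{\bu'}{\bv'}$ together with the verification that $\mathbf{x}=\nu_\bU(\ba\bu')$ is the \emph{same} word for $i=1$ and $i=2$; this crucially relies on the identity $\val_\bU(\bz^{\ell_i}\ba\bu')=\val_\bU(\ba\bu')$ that erases the dependence on $\ell_i$.
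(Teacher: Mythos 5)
Your proof is correct and follows essentially the same route as the paper's: both unpack a product path into its normalizer component, use co-accessibility of $\mathcal{N}_{\bU,\bA}$ to extend that component to a final state, and invoke the absence of outgoing $\bz$-edges at $i_G$ to show the second components cannot begin with $\bz$, which forces them to coincide and hence $\ell_1=\ell_2$. Your write-up is, if anything, slightly more explicit about the $\bz^*$-prefix in the normalizer's acceptance condition and about normalization being insensitive to leading zeroes.
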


\begin{proof}
Let $(p,p')\in Q$, $\ba\in \bA$ and $\ell_1,\ell_2 \in\N$ such that there exist two paths labeled by $\bz^{\ell_1} \ba$ and $\bz^{\ell_2} \ba$ from $(i_\mathcal{N},i_G)$ to $(p,p')$ in $\mathcal{N}_{\bU,\bA} \circledast \mathcal{A}_G$. Then there exist paths respectively labeled by $\couple{\bz^{\ell_1} \ba}{\bv_1}$ and $\couple{\bz^{\ell_2} \ba}{\bv_2}$ for some $\bv_1,\bv_2\in (\bA_{\bU} \cup \overline{\bA_{\bU}})^*$ from $i_\mathcal{N}$ to $p$ in $\mathcal{N}_{\bU,\bA}$. By hypothesis on $\mathcal{A}_G$, $\bv_1$ and $\bv_2$ cannot start with $\bz$. Since $\mathcal{N}_{\bU,\bA}$ is co-accessible, there exists a path, say of label $\couple{\mathbf{x}}{\mathbf{y}}$, from $p$ to a final state in $\mathcal{N}_{\bU,\bA}$. It follows that $\bv_1\mathbf{y}=\nu_U(\bz^{\ell_1} \ba\mathbf{x})=\nu_U(\bz^{\ell_2} \ba\mathbf{x})=\bv_2\mathbf{y}$ which implies $\bv_1=\bv_2$, and in turn $\ell_1=\ell_2$ as desired.
\end{proof}

Next, we study the series recognized by $\mathcal{N}_{\bU,\bA}  \circledast \mathcal{A}_G$.

\begin{lemma}
\label{Lem : ProduitCSSSerie-Prop}
Let $S$ be the series recognized by $\mathcal{N}_{\bU,\bA} \circledast \mathcal{A}_G$. For all $\bw\in \bA^*$ such that $d_{\bU}(\bw) \ne 0$, any path $c\in C_{\mathcal{N}_{\bU,\bA} \circledast \mathcal{A}_G}(\bw)$ is such that $I(i_c)=0$ or $T(t_c)=0$, and 
\[
	(S,\bw)=
	\begin{cases}
	(G_f,||\nu_{\bU}(\bw)||) 	
	& \text{if } d_{\bU}(\bw)=0\\
	0 				
	& \text{otherwise}.
\end{cases}
\]
\end{lemma}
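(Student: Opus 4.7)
The plan is to analyze each path of label $\bw$ in $\mathcal{N}_{\bU,\bA} \circledast \mathcal{A}_G$ by decomposing it into two projections: one reading the normalizer $\mathcal{N}_{\bU,\bA}$ on a label $\couple{\bw}{\bv}$ (with $\bv$ uniquely determined by the states via Lemma~\ref{Lem : DeterministicNormaliser-Dimd}, since $\mathcal{N}_{\bU,\bA}$ is a DFA), and the other reading $\mathcal{A}_G$ on the label $\sigma(\bv)=||\bv||$. The key structural observations used throughout are: (i) by the definition of $I$ in the product, $I(i_c)\neq 0$ forces $i_c=(i_\mathcal{N},i_G)$; (ii) by the definition of $T$, $T(t_c)\neq 0$ forces $t_c=(q,q')$ with $q\in T_\mathcal{N}$ and $T_G(q')\neq 0$; and (iii) the two standing assumptions on $\mathcal{A}_G$, namely that $i_G$ is the unique initial state and has no outgoing $\bz$-edge.

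For the first assertion, I would fix $\bw\in\bA^*$ with $d_{\bU}(\bw)\neq 0$ and a path $c\in C_{\mathcal{N}_{\bU,\bA}\circledast\mathcal{A}_G}(\bw)$, and argue by contradiction that $I(i_c)T(t_c)=0$. Assuming both are nonzero, by (i) and (ii), the normalizer-projection is a path from $i_\mathcal{N}$ to some $q\in T_\mathcal{N}$, hence its label $\couple{\bw}{\bv}$ is accepted by $\mathcal{N}_{\bU,\bA}$, forcing $\bv\in \bz^*\nu_\bU(\bw)$ with $|\bv|=|\bw|$, i.e.\ $\bv=\bz^{-d_\bU(\bw)}\nu_\bU(\bw)$. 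If $d_\bU(\bw)>0$, this is impossible by Remark~\ref{Rk : delay}. If $d_\bU(\bw)<0$, then $\sigma(\bv)=||\bv||$ begins with $\bz$, yet the $\mathcal{A}_G$-projection is a path of label $\sigma(\bv)$ starting at $i_G$, contradicting the absence of an outgoing $\bz$-edge from $i_G$. Either way, we reach a contradiction and conclude $(S,\bw)=\sum_{c\in C(\bw)}I(i_c)E(c)T(t_c)=0$.

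For the second assertion, I treat the case $d_\bU(\bw)=0$, where the argument above forces $\bv=\nu_\bU(\bw)$ (no padding). The normalizer-projection is the unique path in the DFA $\mathcal{N}_{\bU,\bA}$ from $i_\mathcal{N}$ to a final state reading $\couple{\bw}{\nu_\bU(\bw)}$; call its state sequence $p_0=i_\mathcal{N},p_1,\ldots,p_k$. The paths $c$ contributing to $(S,\bw)$ are exactly those whose first components trace $p_0,\ldots,p_k$ and whose second components trace an arbitrary path in $\mathcal{A}_G$ from $i_G$ to a final state reading $||\nu_\bU(\bw)||$. Because the weights carry over verbatim (initial weight $I_G(i_G)$, transition weights $E_G(p'_{i-1},\sigma(\bb_i),p'_i)$, final weight $T_G(p'_k)$), one obtains
\[
(S,\bw)=\sum_{c'\in C_{\mathcal{A}_G}(||\nu_\bU(\bw)||)} I_G(i_{c'})E_G(c')T_G(t_{c'})=(G_f,||\nu_\bU(\bw)||),
\]
the last equality by Proposition~\ref{Prop : LinkSeriesKAut}.

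The main technical subtlety is not a hard estimate but bookkeeping: correctly setting up the bijection between paths in the product and pairs of paths (one in $\mathcal{N}_{\bU,\bA}$, one in $\mathcal{A}_G$) so that weights multiply in the right way. Lemma~\ref{Lem : DeterministicNormaliser-Dimd} is what makes this clean, and the no-$\bz$-edge hypothesis on $i_G$ is exactly the right tool for eliminating the $d_\bU(\bw)<0$ case.
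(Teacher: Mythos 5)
Your proof is correct and follows essentially the same route as the paper's: project each product path onto its normalizer component and its $\mathcal{A}_G$ component, use Remark~\ref{Rk : delay} to rule out the positive-delay case, use the no-outgoing-$\bz$-edge assumption on $i_G$ to rule out the negative-delay case, and set up a weight-preserving correspondence of paths for the zero-delay case. If anything, your handling of the negative-delay case (first forcing $T(t_c)\neq 0$ so that the normalizer projection is accepting, and only then concluding that its second component starts with $\bz$) is slightly more careful than the paper's, which asserts outright that no path labeled $\bw$ leaves $(i_\mathcal{N},i_G)$.
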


\begin{proof}
Let $\bw\in \bA^*$. If $d_{\bU}(\bw)> 0$, then by Remark~\ref{Rk : delay},  the DFA $\mathcal{N}_{\bU,\bA}$ accepts no word with $\bw$ as first $d$ components. Now, assume that $d_{\bU}(\bw) < 0$. By Remark~\ref{Rk : delay}, the unique word $\bv$ such that $\mathcal{N}_{\bU,\bA}$ accepts $\couple{\bw}{\bv}$ is  $\bv=\bz^{-d_{\bU}(\bw)}\nu_U(\bw)$, which starts with $\bz$. Since we assumed that there is no outgoing edge labeled by $\bz$ from $i_G$, there is no path labeled by $\bw$ in $\mathcal{N}_{\bU,\bA} \circledast \mathcal{A}_G$ starting from $(i_\mathcal{N},i_G)$. Therefore, if $d_{\bU}(\bu)\ne 0$ then any path in $C_{\mathcal{N}_{\bU,\bA} \circledast \mathcal{A}_G}(\bw)$ is such that $I(i_c)=0$ or $T(t_c)=0$, and thus $(S,\bw)=0$. 

Suppose now that $d_{\bU}(\bw)= 0$. Write $\bw=\ba_1\cdots \ba_{\ell}$ and $\nu_{\bU}(\bw)=\bb_1 \cdots \bb_{\ell}$. There exists a unique sequence of states $q_0,\ldots,q_{\ell}$ in $Q_{\mathcal{N}}$ such that $q_0=i_{\mathcal{N}}$, $q_{\ell}\in T_\mathcal{N}$ and for all $i\in\{1,\ldots,\ell\}$, $\delta_\mathcal{N}(q_{i-1},\couple{\ba_i}{\bb_i})=q_i$. 
Let $c\in C_{\mathcal{N}_{\bU,\bA} \circledast \mathcal{A}_G}(\bw)$ be such that $i_c=(i_\mathcal{N},i_G)$. Then $c$ can be decomposed as $c=\big((q_0,q_0'),\ba_1,(q_1,q_1')\big)\cdots \big((q_{\ell-1},q_{\ell-1}'),\ba_{\ell},(q_{\ell},q_{\ell}')\big)$ where $q_0'=i_G$ and $q_1',\ldots,q_{\ell}'$ are some states in $Q_G$. Therefore, 
\[
	I(q_0,q_0')\,E(c)\, T(q_\ell,q_\ell')
	=I_G(i_G)
	\left(
	\prod_{i=1}^\ell E_G(q_{i-1}',||\bb_i||,q_i')
	\right)
	T_G(q_\ell').
\]
We obtain
\[
	(S,\bw) 	  
	=\sum_{q_1',\ldots,q_\ell'\in Q_G} 
	I_G(i_G)
	\left( 
	\prod_{i=1}^\ell E_G(q_{i-1}',||\bb_i||,q_i') 		\right)
	T_G(q_\ell')
	=(G_f,||\nu_U(\bw)||). 
\]
\end{proof}

We now modify $\mathcal{N}_{\bU,\bA}  \circledast \mathcal{A}_G$ to create a new $\K$-automaton $\mathcal{A}_{V,\bA}$, which will be proven to recognize the series $V_{f,\bA}$. We define the $\K$-automaton $\mathcal{A}_{V,\bA} = (Q_V, I_V, T_V, \bA, E_V)$ as follows.
\begin{itemize}
\item $Q_V= Q \cup \{\alpha\}$.
\item $I_V\colon Q_V \to \K$ is defined by $\restriction{I_V}{Q}=I$ and $I_V(\alpha)=I(i_\mathcal{N},i_G)$.
\item $T_V\colon Q_V \to \K$ is defined by $\restriction{T_V}{Q}=T$ and $T_V(\alpha)=0$.
\item $E_V \colon Q_V \times \bA \times Q_V \to \K$ is defined as follows.
\begin{enumerate}
\item For all $(q_1,q_1'),(q_2,q_2')\in Q$ and $\ba\in\bA$,
\[
E_V((q_1,q_1'),\ba,(q_2,q_2'))		
=\begin{cases}
	1	&\text{if }q_1'=q_2'= i_G\text{ and }\delta_{\mathcal{A}}(q_1,\couple{\ba}{\bz})=q_2 \\	
	E((q_1,q_1'),\ba,(q_2,q_2'))		
	&\text{else.}
\end{cases}
\]
\item\label{Def : EV-2} For all $q\in Q$ and $\ba\in\bA$, $E_V(\alpha,\ba,q)$ is equal to the sum of the weights $E(c)$ of all paths $c$ from $(i_\mathcal{N},i_G)$ to $q$ in $\mathcal{N}_{\bU,\bA} \circledast \mathcal{A}_G$ labeled by $\bz^\ell \ba$ for some $\ell\in\N_{\ge 1}$.
\item For all $q\in Q_V$ and all $\ba\in \bA$, $E_V(q,\ba,\alpha)=0$.
\end{enumerate}
\end{itemize}

Note that in Item~\ref{Def : EV-2}, if such an integer $\ell$ exists, then it is unique by Lemma~\ref{Lem : unique-ell}, and moreover, it is bounded by the number of states in $Q$. 

The modification $\mathcal{A}_{V,\bA}$ of $\mathcal{N}_{\bU,\bA}  \circledast \mathcal{A}_G$ is symbolically depicted in Figure~\ref{Fig : GeneralCase}, where wavy arrows represent paths whereas straight arrows represent edges. Note that if the alphabet $\bA$ does not contain any negative digit (on any component), then the red part of Figure~\ref{Fig : GeneralCase} is reduced to the loop on the initial state. Let us emphasize that the red part was indeed not present in $\mathcal{N}_{\bU,\bA} \circledast \mathcal{A}_G$ since we took care to assume that the state $i_G$ of the $\K$-automaton $\mathcal{A}_G$ had no incoming edge. In view of Lemma~\ref{Lem : ProduitCSSSerie-Prop}, the $\K$-automaton $\mathcal{N}_{\bU,\bA} \circledast \mathcal{A}_G$ produces the correct coefficients of $V_{f,\bA}$ only for words with zero delay. The role of the state $\alpha$ is to deal with words in $\bA^*$ with positive delays whereas the red edges are added to take into account words with negative delays. Note that $\alpha$ might have outgoing edges of label $\bz$. This happens whenever there exist words $\bw\in\bA^*$ having a delay greater than or equal to $2$.

\begin{figure}[htb]
\begin{center}
\scalebox{0.8}{\begin{tikzpicture}[>=triangle 60]
\draw[rounded corners] (-0.88,-3) rectangle (8,3);
\draw[rounded corners] (5.4,-2.5) rectangle (7.5,2.5);
\draw[rounded corners,dashed] (-6.5,-4) rectangle (9,5);

\node at (7.2,-3.5) {$\mathcal{N}_{\bU,\bA}  \circledast \mathcal{A}_G$};
\node at (8.5,-4.5) {$\mathcal{A}_{V,\bA}$};
\node at (6.5,-1.5) {$\vdots$};
\node at (6.5,1.5) {$\vdots$};
\node at (3.25,0) {$\vdots$};

\tikzstyle{every node}=[shape=circle,fill=none,draw=black,minimum size=40pt, inner sep=2pt]
\node(1) at (0,0) {$(i_\mathcal{N},i_G)$};
\node(2) at (6.5,0) {$q$};
\node[fill=cyan](3) at (1.7,4) {$\alpha$};
\node[red](4) at (-5,3) {$(q_1,i_G)$};
\node[red](5) at (-5,0) {$(q_2,i_G)$};
\node[red](6) at (-5,-3) {$(q_3,i_G)$};

\tikzstyle{every node}=[shape=circle,fill=none,draw=black,minimum size=15pt, inner sep=2pt]
\tikzstyle{every path}=[color=black,line width=0.5 pt]
\tikzstyle{every node}=[shape=circle,minimum size=5pt,inner sep=2pt]
\draw [-Latex] (0,1.5) to node [right] {$I(i_\mathcal{N},i_G)$} (1); 
\draw [-Latex] (1) to node [right] {$T(i_\mathcal{N},i_G)$} (0,-1.5); 
\draw [-Latex,red] (1) to [in=250,out=210, loop, looseness=8] node [below] {$\bz|1$} (1); 
\draw [-Latex,red] (1) to node [above=-0.3] {$\substack{\bb|1,\\ \delta_\mathcal{N}(i_\mathcal{N},\couple{\bb}{\bz})=q_1}$} (4); 
\draw [-Latex,red] (1) to  node [above=-0.8] {$\substack{ \boldsymbol{c}|1, \\ \delta_\mathcal{N}(i_\mathcal{N},\couple{\boldsymbol{c}}{\bz})=q_2}$} (5); 
\draw [-Latex,red] (5) to  node [right] {$\substack{ \boldsymbol{d}|1, \\  \delta_\mathcal{N}(q_2,\couple{\boldsymbol{d}}{\bz})=q_3}$} (6); 
\draw [-Latex] (0,1.5) to node [above] {} (1); 
\draw [-Latex,cyan] (-0.8,4) to node [above=-0.5] {\textcolor{cyan}{$I(i_\mathcal{N},i_G)$}} (3);
\draw [-Latex,cyan] (3) to node [pos=0.4,left] {$\ba\ | \sum_{i=1}^{n}k_i$} (2);

\draw [-Latex,decorate,decoration={snake,amplitude=.4mm,post length=1mm}] (1) to [bend left=20] node [above=-0.35] {$\bz^{\ell_{\ba,q}}\ba | k_1$} (2) ; 
\draw [-Latex,decorate,decoration={snake,amplitude=.4mm,post length=1mm}] (1) to [bend right=20] node [below=-0.35] {$\bz^{\ell_{\ba,q}}\ba | k_n$} (2) ; 
\end{tikzpicture}}
\end{center}
\caption{The automaton $\mathcal{A}_{V,\bA}$ is a modification of $\mathcal{N}_{\bU,\bA}  \circledast \mathcal{A}_G$ which recognizes the series $V_{f,\bA}$.
}
\label{Fig : GeneralCase}
\end{figure}

We are now ready to prove our main result. 

\begin{proof}[Proof of Theorem~\ref{Thm : EquivU}]
The implications $2 \implies 3$ and $3 \implies 4$ are clear. For any $\K$-recognizable series $S$ such that for all $\bn\in\N^d$, $(S,\rep_{\bU}(\bn))=f(\bn)$, we have $G_f=S\odot \underline{\rep_{\bU}(\N^d)}$. For every $j\in\{1,\ldots,d\}$, $\rep_{U_j}(\N)$ is a regular language since $U_j$ is a Pisot numeration system. Therefore, the language $\rep_{\bU}(\N^d)$ is regular as well. The implication $4 \implies 1$ now follows from \cite[Corollary 3.2.3]{BerstelReutenauer2011}.

It remains to prove $1\implies 2$. Suppose that $f\colon\N^d\to \K$ is $(\bU,\K)$-regular. The idea is to show that the modification $\mathcal{A}_{V,\bA}$ of $\mathcal{N}_{\bU,\bA} \circledast \mathcal{A}_G$ recognizes the series $V_{f,\bA}$. Let $S$ and $S'$ be the series recognized by the $\K$-automata $\mathcal{N}_{\bU,\bA} \circledast \mathcal{A}_G$ and $\mathcal{A}_{V,\bA}$ respectively. We have to show that $S'=V_{f,\bA}$. Let $\bw\in \bA^*$. Let $C_1$ be the set of those paths labeled by $\bw$ in $\mathcal{A}_{V,\bA}$ with a first edge of the form $((i_\mathcal{N},i_G),\ba,(q_1,i_G))$ and let $C_2$ be the set of those paths labeled by $\bw$ in $\mathcal{A}_{V,\bA}$ starting in the state $\alpha$. Note that each path in $C_{\mathcal{A}_V}(\bw) \setminus (C_1\cup C_2)$ is a path in $\mathcal{N}_{\bU,\bA} \circledast \mathcal{A}_G$. Therefore, $(S',\bw)$ is equal to
\begin{align}
\label{Eq : Thm : EquivU}
	(S,\bw) 
	+\sum_{c\in C_1} I_V(i_\mathcal{N},i_G)\, E_V(c)\, T_V(t_c)  
 	+\sum_{c \in C_2} I_V(\alpha)\, E_V(c)\, T_V(t_c).
\end{align}
In order to obtain that $(S',\bw)=(V_{f,\bA},\bw)$, we show that, depending on the delay $d_{\bU}(\bw)$, exactly two terms of the sum \eqref{Eq : Thm : EquivU} are zero while the third one is equal to $(V_{f,\bA},\bw)$. 

Let us write 
\[
	\bw=\duple{w_1}{w_d}.
\]
For all $j\in\{1,\ldots,d\}$, since $\nu_{U_j}(w_j)\in A_{U_j}^*\cup(\overline{A_{U_j}})^*$, we have that
$\val_{U_j}(||\nu_{U_j}(w_j)||)
	=||\val_{U_j}(\nu_{U_j}(w_j))||
	=||\val_{U_j}(w_j)||$.
Then by definition of the series $G_f$ and $V_{f,\bA}$, we get that 
\begin{equation}
\label{Eq : GV}
(G_f,||\nu_{\bU}(\bw)||)
=f(\val_{\bU}(||\nu_{\bU}(\bw)||)
= f(||\val_{\bU}(\bw)||) 
= (V_{f,\bA},\bw).
\end{equation} 
It then follows from Lemma~\ref{Lem : ProduitCSSSerie-Prop} that
\begin{equation}
\label{Eq : (S,w) et (V(f,A),w)}
(S,\bw)=
\begin{cases}
(V_{f,\bA},\bw) 	& \text{if } d_{\bU}(\bw)=0 \\
0 				& \text{else}.
\end{cases}
\end{equation}

Now, we consider the second term of \eqref{Eq : Thm : EquivU}. We show that 
\[
	\sum_{c\in C_1} I_V(i_\mathcal{N},i_G)\ E_V(c)\ T_V(t_c)  
 	=\begin{cases}
	(V_{f,\bA},\bw) 	& \text{if } d_{\bU}(\bw)<0 \\
	0 				& \text{else}.
	\end{cases}
\]
For $c\in C_1$, let $\ell_c\in \N_{\ge 1}$ be the number of times $c$ goes through states with $i_G$ as second component. By definition of $\mathcal{A}_{V,\bA}$, there exists a word $\bv_c\in (\bA_{\bU}\cup\overline{\bA_\bU})^*$ not starting with the letter $\bz$ such that $\couple{\bw}{\bz^{\ell_c}\bv_c}$ is the label of a path in $\mathcal{N}_{\bU,\bA}$ starting in the initial state $i_\mathcal{N}$. The latter path ends in a final state if and only if $d_{\bU}(\bw)=-\ell_c$ and $\bv_c=\nu_{\bU}(\bw)$. Therefore, if $d_{\bU}(\bw)\ge 0$ then $T_V(t_c)=0$. Now suppose that $d_{\bU}(\bw)< 0$. Let $\bw=\bw'\bw''$ with $|\bw'|=-d_{\bU}(\bw)$ and let $q_0,q_1,\ldots,q_{|\bw|}$ be the states visited along the unique accepting path labeled by $\couple{\bw}{\bz^{-d_{\bU}(\bw)}\nu_{\bU}(\bw)}$ in the DFA $\mathcal{N}_{\bU,\bA}$. Let $(C_1)''$ be the set of those paths $c''$ in $\mathcal{N}_{\bU,\bA} \circledast \mathcal{A}_G$ labeled by $\bw''=\ba_{|\bw'|}\cdots \ba_{|\bw|}$ of the form
\[
	c''=((q_{|\bw'|},i_G),\ba_{|\bw'|+1},(q_{|\bw'|+1},q_{|\bw'|+1}'))
	\cdots
	((q_{|\bw|-1},q_{|\bw|-1}'),\ba_{|\bw|},(q_{|\bw|},q_{|\bw|}'))\]
for some $q_{|\bw'|+1}',\ldots,q_{|\bw|}'\in Q_G$.
We get 
\begin{align*}
	\sum_{c\in C_1}I_V(i_\mathcal{N},i_G)E_V(c)\,  T_V(t_c)
	&=	\sum_{\substack{c\in C_1\\ \ell_c=-d_{\bU}(\bw)\\ \bv_c=\nu_{\bU}(\bw)}}I_V(i_\mathcal{N},i_G)E_V(c)\,  T_V(t_c)\\
	&=\sum_{c''\in (C_1)''}
	I(i_\mathcal{N},i_G)\, 1^{-d_{\bU}(\bw)}\, E(c'')T(t_{c''}).
\end{align*}
By a reasoning similar to that of the second part of the proof of Lemma~\ref{Lem : ProduitCSSSerie-Prop},	
the latter sum is equal to $(G_f,||\nu_{\bU}(\bw)||)$. We conclude by using~\eqref{Eq : GV}.

We turn to the third term of \eqref{Eq : Thm : EquivU}. We show that 
\[
 	\sum_{c\in C_2} I_V(\alpha)E_V(c)T_V(t_c)  
 	=\begin{cases}
	(V_{f,\bA},\bw) 	& \text{if } d_{\bU}(\bw)>0 \\
	0 				& \text{else}.
	\end{cases}
\]
If $\bw=\boldsymbol{\varepsilon}$, the desired equality follows from the facts that $d_{\bU}(\boldsymbol{\varepsilon})=0$ and $T_V(\alpha)=0$. We now suppose that $\bw\ne \boldsymbol{\varepsilon}$ and write $\bw=\ba\bw'$. Any path $c\in C_2$ can be decomposed as $c=(\alpha,\ba,q)c'$ where $q\in Q$ and $c'$ is path in $\mathcal{N}_{\bU,\bA} \circledast \mathcal{A}_G$ labeled by $\bw'$. For each $q\in Q$, we let $P_q$ be the set of those paths in $\mathcal{N}_{\bU,\bA} \circledast \mathcal{A}_G$ labeled by $\bw'$ and starting in $q$. By definition, the transition $(\alpha,\ba,q)$ has a non-zero weight in $\mathcal{A}_{V,\bA}$ if there exists a path in $\mathcal{N}_{\bU,\bA} \circledast \mathcal{A}_G$ labeled by $\bz^{\ell} \ba$ for some $\ell\in\N_{\ge 1}$ from $(i_\mathcal{N},i_G)$ to $q$. By Lemma~\ref{Lem : unique-ell}, all such paths have the same label $\bz^{\ell_{\ba,q}}\ba$. Then
\begin{align}
	\sum_{c\in C_2} I_V(\alpha)E_V(c)T_V(t_c) \nonumber 
	&= \sum_{q\in Q}\sum_{c'\in P_q} 
	I_V(\alpha)E_V(\alpha,\ba,q)E_V(c')T_V(t_{c'}) \nonumber \\
	&= \sum_{q\in Q} \sum_{c'\in P_q} I(i_\mathcal{N},i_G)  
	\left(
	\sum_{\substack{
	p\in C_{\mathcal{N}_{\bU,\bA} \circledast \mathcal{A}_G}(\bz^{\ell_{\ba,q}}\ba)\\
	i_p=(i_\mathcal{N},i_G),\, t_p=q}} 
	E(p)	\right) 
	E(c')T(t_{c'}) \nonumber \\
	&= \sum_{q\in Q} \
	\sum_{\substack{p\in C_{\mathcal{N}_{\bU,\bA} \circledast \mathcal{A}_G}(\bz^{\ell_{\ba,q}}\ba)\\i_p=(i_\mathcal{N},i_G),\, t_p=q}}\ 
	\sum_{c'\in P_q}  
	I(i_\mathcal{N},i_G)E(pc')T(t_{c'}).  
\label{Eq : c'} 
\end{align}
If $\ell_{\ba,q}\ne d_{\bU}(\bw)$, we have $d_{\bU}(\bz^{\ell_{\ba,q}}\bw)\ne 0$ and then it follows from Lemma~\ref{Lem : ProduitCSSSerie-Prop} that each path $pc'$ as in~\eqref{Eq : c'} is such that $T(t_{c'})=0$.
In particular, if $d_{\bU}(\bw)\le 0$ then each term of the sum is equal to $0$.
Now, we suppose that $d_{\bU}(\bw)> 0$.
Then the sum \eqref{Eq : c'} is equal to
\[
	\sum_{\substack{c\in C_{\mathcal{N}_{\bU,\bA} \circledast \mathcal{A}_G}(\bz^{d_{\bU}(\bw)}\bw)\\ i_c=(i_\mathcal{N},i_G)}}
	 I(i_\mathcal{N},i_G)E(c)T(t_c) 
	=(S,\bz^{d_{\bU}(\bw)}\bw).
\]
By~\eqref{Eq : (S,w) et (V(f,A),w)} applied to $\bz^{d_{\bU}(\bw)}\bw$, we have $(S,\bz^{d_{\bU}(\bw)}\bw)=(V_{f,\bA},\bz^{d_{\bU}(\bw)}\bw)=(V_{f,\bA},\bw)$, hence the conclusion.
\end{proof}

\begin{example}
\label{Ex : ExempleEquiv}
With our notation, if $f$ is the sequence \eqref{Eq : f-ex}, then the series $G_f$ is equal to \eqref{Eq : series-ex} and the series $V_{f,\{0,1,2\}}$ is given by
\[
	(V_{f,\{0,1,2\}},w)=
	\begin{cases}
	3^{|\nu_U(w)|_1}\, 2^{|\nu_U(w)|_2}
	+3^{|\nu_U(w)|_1-g(\nu_U(w))}\, 2^{|\nu_U(w)|_2}
	&\text{if } g(\nu_U(w))\ne 0\\
	3^{|\nu_U(w)|_1}\, 2^{|\nu_U(w)|_2}
	&\text{if } g(\nu_U(w))= 0,
	\end{cases}
\]
for all $w\in \{0,1,2\}^*$. Let us call $\mathcal{A}_G$ and $\mathcal{A}_{V,\{0,1,2\}}$ the $\K$-automata of Figures~\ref{Fig : AutPoidsSerie} and~\ref{Fig : ProdSerieNormalisateur} respectively. 
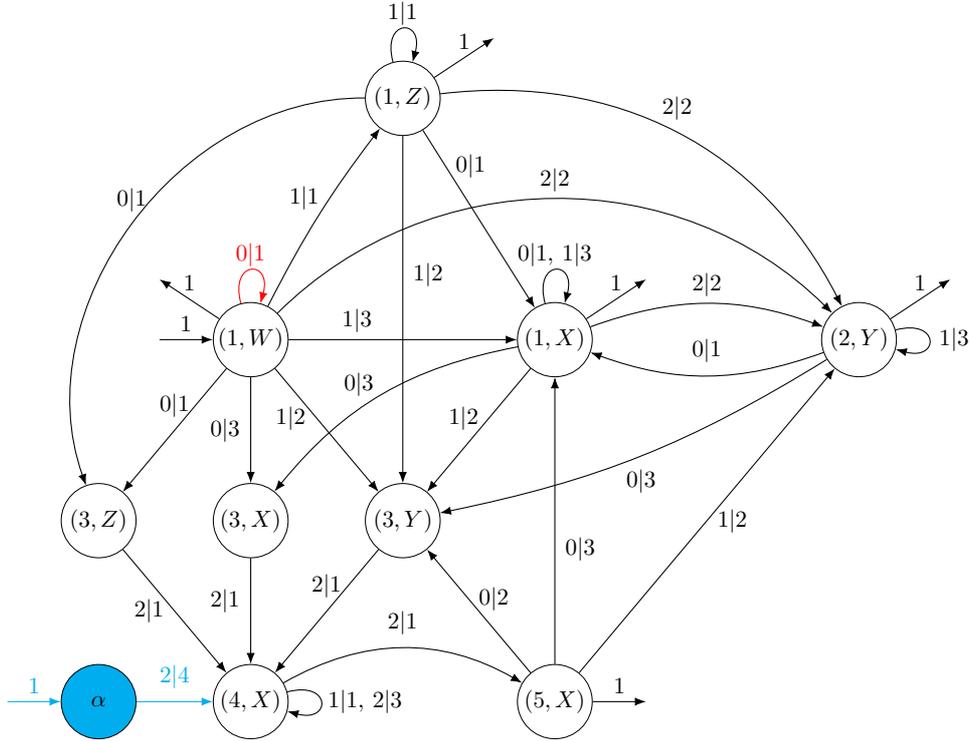
\begin{figure}[htb]
\begin{center}
\scalebox{0.8}{\begin{tikzpicture}
\tikzstyle{every node}=[-Latex, shape=circle, fill=none, draw=black,minimum size=35pt, inner sep=0pt]
\node(1) at (0,0) {$(1,W)$};
\node(2) at (5,0) {$(1,X)$};
\node(3) at (10,0) {$(2,Y)$};
\node(4) at (0,-3) {$(3,X)$};
\node(5) at (2.5,-3) {$(3,Y)$};
\node(6) at (0,-6) {$(4,X)$};
\node(7) at (5,-6) {$(5,X)$};
\node(8) at (2.5,4) {$(1,Z)$};
\node(9) at (-2.5,-3) {$(3,Z)$};
\node[fill=cyan](10) at (-2.5,-6) {$\alpha$};

\tikzstyle{every node}=[shape=circle, fill=none, draw=black,
minimum size=30pt, inner sep=2pt]
\tikzstyle{every path}=[color=black, line width=0.5 pt]
\tikzstyle{every node}=[shape=circle, minimum size=5pt, inner sep=2pt]

\draw [-Latex] (-1.5,0) to node [above] {$1$} (1); 
\draw [cyan,-Latex] (-4,-6) to node [above] {$1$} (10); 

\draw [-Latex] (1) to node [above] {$1$} (-1.5,1); 
\draw [-Latex] (2) to node [above] {$1$} (6.5,1); 
\draw [-Latex] (3) to node [above] {$1$} (11.5,1); 
\draw [-Latex] (7) to node [above] {$1$} (6.5,-6); 
\draw [-Latex] (8) to node [above] {$1$} (4,5);

\draw [red,-Latex] (1) to [loop above,looseness=6] node [above=-0.16] {$0|1$} (1);
\draw [-Latex] (1) to [pos=0.3] node [above=-0.1] {$1|3$} (2);
\draw [-Latex] (1) to [bend left=45] node [above=-0.1] {$2|2$} (3);
\draw [-Latex] (1) to [pos=0.5] node [left] {$0|3$} (4);
\draw [-Latex] (1) to [pos=0.4] node [left] {$1|2$} (5);
\draw [-Latex] (1) to [pos=0.6,bend left=5] node [left] {$1|1$} (8);
\draw [-Latex] (1) to node [above] {$0|1$} (9);

\draw [-Latex] (2) to [loop above,looseness=6] node [above=-0.48] {$0|1,\, 1|3$} (2);
\draw [-Latex] (2) to [bend left=20] node [above=-0.1] {$2|2$} (3);
\draw [-Latex] (2) to [pos=0.4] node [left] {$1|2$} (5);
\draw [-Latex] (2) to [pos=0.6,bend right=20] node [above] {$0|3$} (4);

\draw [-Latex] (3) to [loop right,looseness=6] node [right] {$1|3$} (3);
\draw [-Latex] (3) to [bend left=20] node [above] {$0|1$} (2);
\draw [-Latex] (3) to [bend left=10] node [below] {$0|3$} (5);

\draw [-Latex] (4) to node [pos=0.4,left] {$2|1$} (6);

\draw [-Latex] (5) to [pos=0.5] node [above] {$2|1$} (6);

\draw [-Latex] (6) to [loop right,looseness=6] node [right] {$1|1,\,2|3$} (6);
\draw [-Latex] (6) to [bend left] node [above] {$2|1$} (7);

\draw [-Latex] (7) to [] node [pos=0.4,right] {$0|3$} (2);
\draw [-Latex] (7) to [] node [pos=0.6,right] {$0|2$} (5);
\draw [-Latex] (7) to [] node [right] {$1|2$} (3);

\draw [-Latex] (8) to [loop above,looseness=6] node [above=-0.16] {$1|1$} (8);
\draw [-Latex] (8) to [pos=0.2] node [right] {$0|1$} (2);
\draw [-Latex] (8) to [bend left=35] node [above] {$2|2$} (3);
\draw [-Latex] (8) to [pos=0.4] node [right] {$1|2$} (5);
\draw [-Latex] (8) to [bend right=55] node [above] {$0|1$} (9);

\draw [-Latex] (9) to [] node [left] {$2|1$} (6);

\draw [cyan,-Latex] (10) to [] node [above] {$2|4$} (6);
\end{tikzpicture}}
\end{center}
\caption{The automaton $\mathcal{A}_{V,\{0,1,2\}}$ whose black part is $\mathcal{N}_U \circledast \mathcal{A}_G$, where $U$ is the Bertrand numeration system associated with the Pisot number $\frac{3+\sqrt{5}}{2}$.} 
\label{Fig : ProdSerieNormalisateur}
\end{figure}
Note that $\mathcal{A}_G$ satisfies our assumptions: it recognizes $G_f$ and has a unique initial state, which has no incoming edge and no outgoing edge of label $0$. By looking at the normalizer $\mathcal{N}_U$ in Figure~\ref{Fig : NormaliserPhiCarre}, we see that for all $w\in\{0,1,2\}^*$, the delay $d_{\bU}(w)$ is at most $1$. This implies that in $\mathcal{A}_{V,\{0,1,2\}}$, the state $\alpha$ has no outgoing edge labeled by $0$. By our construction, the series $V_{f,\{0,1,2\}}$ is recognized by $\mathcal{A}_{V,\{0,1,2\}}$, which is precisely the modified version of $\mathcal{N}_U \circledast \mathcal{A}_G$. 
\end{example}

\begin{remark}
In the proof of Theorem~\ref{Thm : EquivU}, the Pisot hypothesis might be too strong. For proving the implication $(1)\implies (2)$, it is merely used in order to get that the normalization is computable by a finite automaton. For the implication $(4)\implies (1)$, we only need the assumption that the numeration systems $U_j$ all have regular numeration languages $\rep_{U_j}(\N)$. The other implications do not need any assumption on the numeration systems. 
\end{remark}

\section{From a greedy-based linear representation to a value-based one}
\label{Sec:RepLin}

The proof of Theorem~\ref{Thm : EquivU} is constructive. In particular, by using the correspondence between $\K$-automata and linear representations, we provide an algorithm that, starting with a linear representation of the series $G_f=\sum_{\bn\in\N^d}f(\bn)\rep_{\bU}(\bn)$, computes a linear representation of the series $V_{f,\bA}=\sum_{\bw\in\bA^*} f(||\val_{\bU}(\bw)||)\,\bw$.

Let $(\lambda, \mu, \gamma)$ be a linear representation of $G_f$ with $\lambda \in \K^{1 \times r}$, $\mu \colon \bA_\bU^* \to \K^{r\times r}$ and $\gamma \in \K^{r \times 1}$ corresponding to the $\K$-automaton $\mathcal{A}_G$. In particular, $r$ is the number of states of $\mathcal{A}_G$ and for all $i,j\in\{1,\ldots,r\}$ and all $\ba\in\bA_{\bU}$, $\mu(\ba)_{i1}=0$, $\mu(\bz)_{1j}=0$ and $\lambda_i=0$ if $i\ne 1$. Using Definition~\ref{Def : CCS}, we first build a linear representation $(L,M,R)$ of the series recognized by $\mathcal{N}_{\bU,\bA} \circledast \mathcal{A}_G$ with $L \in \K^{1 \times rs}$, $M \colon \bA^* \to \K^{rs\times rs}$ and $R\in \K^{rs \times 1}$. Let $Q_{\mathcal{N}}=\{q_1,\ldots,q_s\}$ where $q_1$ is the unique initial state. We set 
\[
	L=	\begin{pmatrix}
	\lambda_1 & \undermat{rs-1}{0 & \cdots & 0} 
	\end{pmatrix}
	\quad\text{and}\quad
	R=\duple{R_1}{R_s}
\]
where $R_1,\ldots,R_s\in\K^{r\times 1}$ are given by
\[
	R_i =
	\begin{cases}
	\gamma 
	& \text{if } q_i\in T_\mathcal{N}\\
	0_{r\times 1} 
	& \text{otherwise}
	\end{cases}
\]
for all $i\in\{1,\ldots,s\}$. For all $\ba\in\bA$ and all $i,j\in\{1,\ldots,s\}$, we define matrices $M^{(a)}_{i,j}\in\K^{r\times r}$ by
\[
	M^{(a)}_{i,j}
	= \begin{cases}
	\mu(||\bb||) & \text{if }\exists \bb\in \bA_{\bU}\cup\overline{\bA_{\bU}},\, \delta_{\mathcal{N}}(q_i,\couple{\ba}{\bb})=q_j\\
0_{r\times r} & \text{otherwise}.
	\end{cases}
\]
Finally, for all $\ba\in\bA$, we set
\[
	M(\ba)
	=\begin{pmatrix}
	M^{(a)}_{1,1} & \cdots & M^{(a)}_{1,s}\\
	\vdots &  & \vdots\\
	M^{(a)}_{s,1} & \cdots & M^{(a)}_{s,s}\\
	\end{pmatrix}.
\]

The last step is to encode the modification $\mathcal{A}_{V,\bA}$ of $\mathcal{N}_{\bU,\bA} \circledast \mathcal{A}_G$ that consists in adding the state $\alpha$ together with its transitions and the so-called ``red edges''. A linear representation of the series $V_{f,\bA}$ is given by $(\lambda', \mu', \gamma')$ with $\lambda' \in \K^{1 \times (rs+1)}$, $\mu' \colon \bA^* \to \K^{(rs+1)\times (rs+1)}$ and $\gamma' \in \K^{(rs+1) \times 1}$ defined as follows.
We set $\lambda'=\left(\begin{smallmatrix} \lambda_1 & L \end{smallmatrix}\right)$ and $\gamma'=\couple{0}{R}$. For each $\ba\in\bA$ and each $j\in\{1,\ldots,rs\}$, we let $\ell_{\ba,j}$ be the unique positive $\ell$ such that there exists a path labeled by $0^\ell\ba$ from the initial state to the $j$-th state of $\mathcal{N}_{\bU,\bA} \circledast \mathcal{A}_G$ if it exists (see Lemma~\ref{Lem : unique-ell} and Figure~\ref{Fig : GeneralCase}). Here, we consider the order on the states of $\mathcal{N}_{\bU,\bA} \circledast \mathcal{A}_G$ induced by the matrix indexing. For $\ba\in\bA$, we define
\[
	\mu'(\ba)=
	\left(\begin{smallmatrix}
	0 &  m(\ba)\\
	0_{rs\times 1} & M'(\ba)
	\end{smallmatrix}\right)
\]
where for all $i,j\in\{1,\ldots,rs\}$,
\[
	(M'(\ba))_{i,j}
	=\begin{cases}
	1 				& \text{if } i\equiv j\equiv 1\bmod r\text{ and } \delta_{\mathcal{N}}\big(q_{\frac{i-1}{r}},\couple{\ba}{\bz}\big)=q_{\frac{j-1}{r}}
	\\
	(M(\ba))_{i,j} 	& \text{otherwise}
	\end{cases}
\]
and 
\[
	m(\ba)_j= 
	\begin{cases}
	M(\bz^{\ell_{\ba,j}}\ba)_{1,j} & \text{if } \ell_{\ba,j} \text{ exists} \\
	0	& \text{otherwise}. 
	\end{cases}
\]

To compute $m(\ba)$ for each $\ba\in\bA$, we can either look at the $\K$-automaton $\mathcal{N}_{\bU,\bA} \circledast \mathcal{A}_G$ in order to determine the exponents $\ell_{\ba,j}$ for $j\in\{1,\ldots,rs\}$, or we can use the observation that 
\[
	m(\ba)=
	\begin{pmatrix}
	\displaystyle{\sum_{\ell=1}^{rs}}M(\bz^\ell\ba)_{1,1} 
	&\cdots &
	\displaystyle{\sum_{\ell=1}^{rs}}M(\bz^\ell\ba)_{1,rs} 
	\end{pmatrix}
\]
since for all $j\in\{1,\ldots,rs\}$, $\ell_{\ba,j}\le rs$ and $M(\bz^\ell\ba)_{1,j}=0$ if $\ell\ne \ell_{\ba,j}$.

In practice, removing non-accessible and non-coaccessible states often drastically reduces the size of the linear representation of $V_{f,\bA}$.

\begin{example}
The linear representation $(\lambda',\mu',\gamma')$ corresponding to the automaton $\mathcal{A}_{V,\{0,1,2\}}$ of Figure~\ref{Fig : ProdSerieNormalisateur} with respect to the order of the states
\[
	\alpha,(1,W),(1,X),(1,Z),(2,Y),(3,X),(3,Y),(3,Z),(4,X),(5,X)
\]
is given by 
$\lambda'=\left(\begin{smallmatrix}
	1 & 1 & 0 & 0 & 0 & 0 & 0 & 0 & 0 & 0
	\end{smallmatrix}\right),\ 
\gamma'=\left(\begin{smallmatrix}
	0 & 1 & 1 & 1 & 1 & 0 & 0 & 0 & 0 & 1
	\end{smallmatrix}\right)^\intercal$
and
\[
	\mu'(0)=\left(\begin{smallmatrix}
	0 & 0 & 0 & 0 & 0 & 0 & 0 & 0 & 0 & 0\\
	0 & 1 & 0 & 0 & 0 & 3 & 1 & 0 & 0 & 0\\
	0 & 0 & 1 & 0 & 0 & 3 & 0 & 0 & 0 & 0\\
 	0 & 0 & 1 & 0 & 0 & 0 & 0 & 1 & 0 & 0\\
	0 & 0 & 1 & 0 & 0 & 0 & 3 & 0 & 0 & 0\\
	0 & 0 & 0 & 0 & 0 & 0 & 0 & 0 & 0 & 0\\
	0 & 0 & 0 & 0 & 0 & 0 & 0 & 0 & 0 & 0\\
	0 & 0 & 0 & 0 & 0 & 0 & 0 & 0 & 0 & 0\\
	0 & 0 & 0 & 0 & 0 & 0 & 0 & 0 & 0 & 0\\
	0 & 0 & 3 & 0 & 0 & 0 & 2 & 0 & 0 & 0
	\end{smallmatrix}\right),\ 	
	\mu'(1)=\left(\begin{smallmatrix}
	0 & 0 & 0 & 0 & 0 & 0 & 0 & 0 & 0 & 0\\
	0 & 0 & 3 & 1 & 0 & 2 & 0 & 0 & 0 & 0\\
	0 & 0 & 3 & 0 & 0 & 0 & 2 & 0 & 0 & 0\\
	0 & 0 & 0 & 1 & 0 & 0 & 0 & 0 & 0 & 0\\
	0 & 0 & 0 & 0 & 3 & 0 & 0 & 0 & 0 & 0\\
	0 & 0 & 0 & 0 & 0 & 0 & 0 & 0 & 0 & 0\\
	0 & 0 & 0 & 0 & 0 & 0 & 0 & 0 & 0 & 0\\
	0 & 0 & 0 & 0 & 0 & 0 & 0 & 0 & 0 & 0\\
	0 & 0 & 0 & 0 & 0 & 0 & 0 & 0 & 1 & 0\\
	0 & 0 & 0 & 0 & 2 & 0 & 0 & 0 & 0 & 0				\end{smallmatrix}\right),\	
	\mu'(2)=\left(\begin{smallmatrix}
	0 & 0 & 0 & 0 & 0 & 0 & 0 & 0 & 4 & 0\\
	0 & 0 & 0 & 0 & 2 & 0 & 0 & 0 & 0 & 0\\
	0 & 0 & 0 & 0 & 2 & 0 & 0 & 0 & 0 & 0\\
	0 & 0 & 0 & 0 & 2 & 0 & 0 & 0 & 0 & 0\\
	0 & 0 & 0 & 0 & 0 & 0 & 0 & 0 & 0 & 0\\
	0 & 0 & 0 & 0 & 0 & 0 & 0 & 0 & 1 & 0\\
	0 & 0 & 0 & 0 & 0 & 0 & 0 & 0 & 1 & 0\\
	0 & 0 & 0 & 0 & 0 & 0 & 0 & 0 & 1 & 0\\
	0 & 0 & 0 & 0 & 0 & 0 & 0 & 0 & 3 & 1\\
	0 & 0 & 0 & 0 & 0 & 0 & 0 & 0 & 0 & 0
	\end{smallmatrix}\right).	
\]
\end{example}

\section{Acknowledgment}
Célia Cisternino is supported by the FNRS Research Fellow grant 1.A.564.19F. At the time this paper was submitted, Manon Stipulanti was a postdoc at Hofstra University (New York, USA) supported by a Francqui Foundation Fellowship of the Belgian American Educational Foundation. She is currently supported by the FNRS Research grant 1.B.397.20.
\bibliographystyle{plain}
\bibliography{CharlierCisterninoStipulanti.bib}
\end{document}